\newcommand{\inv}{\mathrm{inv}}
\newcommand{\semilength}{\mathrm{semilength}}
\newcommand{\height}{\mathrm{height}}
\newcommand{\swb}{\mathrm{swb}}
\newcommand{\neb}{\mathrm{neb}}
\newcommand{\nesting}{\mathrm{N}}
\newcommand{\FPL}[1]{\textrm{FPL}_#1}
\newcommand{\C}{\mathrm{C}}
\newcommand{\diag}{\mathrm{Diag}}
\newtheorem{thm}{Theorem}[section]
\newtheorem{lemma}[thm]{Lemma}
\newtheorem{prop}[thm]{Proposition}
\theoremstyle{definition}
\newtheorem{defn}[thm]{Definition}
\begin{document}
\title{Alternating Paths of Fully Packed Loops and Inversion Number}

\author[S. Ng]{Stephen Ng}
\address{Department of Mathematics\\
University of Rochester\\
Rochester, NY 14627, USA}
\email{ng@math.rochester.edu}

\date{Version: \today}
\maketitle

\begin{abstract}
We consider the set of alternating paths on a fixed fully packed loop of size $n$, which we denote by $\phi_0$. This set is in bijection with the set of fully packed loops of size $n$ and is also in bijection with the set of alternating sign matrices by a well known bijection. Furthermore, for a special choice of $\phi_0$, we demonstrate that the set of alternating paths are nested osculating loops, which give rise to a modified height function representation which we call Dyck islands. Dyck islands can be constructed as a union of lattice Dyck paths, and we use this structure to give a simple graphical formula for the calculation of the inversion number of an alternating sign matrix.
\end{abstract}

\section{Introduction}
The motivation for studying alternating paths of fully packed loops began with the online note of Ayyer and Zeilberger \cite{AZ} on an attempt to prove the Razumov-Stroganov conjecture. This note introduced the notion of an alternating path of a fully packed loop and described their action on the underlying link patterns in simple example cases. Furthermore, they conjectured the existence of an algorithm for finding an alternating path that would implement the pullback of the local XXZ Hamiltonians into the space of fully packed loops in such a way that would provide a solution to the Razumov-Stroganov conjecture. The RS conjecture has since been solved by Cantini and Sportiello's detailed analysis \cite{CantiniSportiello} of Wieland's gyration operation \cite{Wieland} on fully packed loops, but the question of the existence of an algorithm with the desired properties remains open.

On another note, Striker \cite{StrikerPolytope} and Behrend and Knight \cite{BK} independently studied the notion of the alternating sign matrix polytope. In particular, Striker  gave a nice characterization of the face lattice of this polytope in terms of what she called doubly directed regions of flow diagrams \cite{StrikerPolytope}. 

Recast into the fully packed loop picture, this is described as follows: Given any two fully packed loops, there is an alternating path (possibly a disjoint union of alternating loops) along which they differ in color. Then given some collection of fully packed loops of size $n$, one can consider the union of all alternating paths between pairs of fully packed loops. This union represents the smallest face of the alternating sign matrix polytope which contains all of the fully packed loops in the collection.

In what follows, this paper is divided into two additional sections. In Section \ref{sec:defn}, we present all relevant definitions and develop the correspondence between alternating sign matrices, fully packed loops, and Dyck islands. Section \ref{sec:inv} of this paper then demonstrates the utility of this new representation by establishing a connection between the shape of the Dyck island and the inversion number of an alternating sign matrix. In particular, we show that the inversion number of an alternating sign matrix can be decomposed as follows:
\begin{equation*}
	\inv(A) = \sum_{i=1}^{\ell} \inv(\gamma_i) - k
\end{equation*}
where $\gamma_1, \ldots, \gamma_\ell$ are boundary paths of the Dyck island corresponding to $A$, $k$ is the number of off-diagonal osculations of these paths, and $\inv(\gamma_i)$ is the inversion number of the alternating sign matrix corresponding to the Dyck island described by just $\gamma_i$. The quantity $\inv(\gamma_i)$ will be shown to be dependent only on the diameter of the loop and the number of osculations on the diagonal. See Figure \ref{fig:DIinv} below for some preliminary examples.

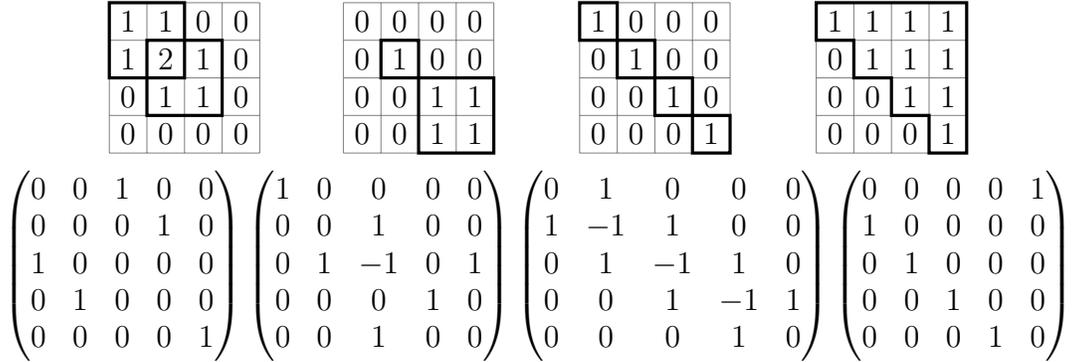
\begin{figure}[h]
	\begin{center}
		\begin{tikzpicture}
			\draw[step=.5cm,gray,very thin] (-1,-1) grid (1,1);
			\draw[very thick] (-1,1) -- (0,1) -- (0,0) -- (-1,0) -- cycle;
			\draw[very thick] (-.5,.5) -- (.5,.5) -- (.5,-.5) -- (-.5,-.5) -- cycle;
			\node at (-.75,.75) {$1$};
			\node at (-.25,.75) {$1$};
			\node at (.25,.75) {$0$};
			\node at (.75,.75) {$0$};
			\node at (-.75,.25) {$1$};
			\node at (-.25,.25) {$2$};
			\node at (.25,.25) {$1$};
			\node at (.75,.25) {$0$};
			\node at (-.75,-.25) {$0$};
			\node at (-.25,-.25) {$1$};
			\node at (.25,-.25) {$1$};
			\node at (.75,-.25) {$0$};
			\node at (-.75,-.75) {$0$};
			\node at (-.25,-.75) {$0$};
			\node at (.25,.-.75) {$0$};
			\node at (.75,-.75) {$0$};
		\end{tikzpicture}
		$\qquad$
		\begin{tikzpicture}
			\draw[step=.5cm,gray,very thin] (-1,-1) grid (1,1);
			\draw[very thick] (-.5,0) rectangle (0,.5);
			\draw[very thick] (0,-1) rectangle (1,0);
			\node at (-.75,.75) {$0$};
			\node at (-.25,.75) {$0$};
			\node at (.25,.75) {$0$};
			\node at (.75,.75) {$0$};
			\node at (-.75,.25) {$0$};
			\node at (-.25,.25) {$1$};
			\node at (.25,.25) {$0$};
			\node at (.75,.25) {$0$};
			\node at (-.75,-.25) {$0$};
			\node at (-.25,-.25) {$0$};
			\node at (.25,-.25) {$1$};
			\node at (.75,-.25) {$1$};
			\node at (-.75,-.75) {$0$};
			\node at (-.25,-.75) {$0$};
			\node at (.25,.-.75) {$1$};
			\node at (.75,-.75) {$1$};
		\end{tikzpicture}
		$\qquad$
		\begin{tikzpicture}
			\draw[step=.5cm,gray,very thin] (-1,-1) grid (1,1);
			\draw[very thick] (-1,.5) rectangle (-.5,1);
			\draw[very thick] (-.5,0) rectangle (0,.5);
			\draw[very thick] (0,-.5) rectangle (.5,0);
			\draw[very thick] (.5,-1) rectangle (1,-.5);
			\node at (-.75,.75) {$1$};
			\node at (-.25,.75) {$0$};
			\node at (.25,.75) {$0$};
			\node at (.75,.75) {$0$};
			\node at (-.75,.25) {$0$};
			\node at (-.25,.25) {$1$};
			\node at (.25,.25) {$0$};
			\node at (.75,.25) {$0$};
			\node at (-.75,-.25) {$0$};
			\node at (-.25,-.25) {$0$};
			\node at (.25,-.25) {$1$};
			\node at (.75,-.25) {$0$};
			\node at (-.75,-.75) {$0$};
			\node at (-.25,-.75) {$0$};
			\node at (.25,.-.75) {$0$};
			\node at (.75,-.75) {$1$};
		\end{tikzpicture}
		$\qquad$
		\begin{tikzpicture}
			\draw[step=.5cm,gray,very thin] (-1,-1) grid (1,1);
			\draw[very thick] (1,-1) -- ++(-.5,0) -- ++(0,.5) -- ++(-.5,0) -- ++(0,.5) -- ++(-.5,0) -- ++(0,.5) -- ++(-.5,0) -- ++(0,.5) -- ++(2,0) -- cycle;
			\node at (-.75,.75) {$1$};
			\node at (-.25,.75) {$1$};
			\node at (.25,.75) {$1$};
			\node at (.75,.75) {$1$};
			\node at (-.75,.25) {$0$};
			\node at (-.25,.25) {$1$};
			\node at (.25,.25) {$1$};
			\node at (.75,.25) {$1$};
			\node at (-.75,-.25) {$0$};
			\node at (-.25,-.25) {$0$};
			\node at (.25,-.25) {$1$};
			\node at (.75,-.25) {$1$};
			\node at (-.75,-.75) {$0$};
			\node at (-.25,-.75) {$0$};
			\node at (.25,.-.75) {$0$};
			\node at (.75,-.75) {$1$};
		\end{tikzpicture}
		\begin{equation*}
			\begin{pmatrix}
				0 & 0 & 1 & 0 & 0 \\
				0 & 0 & 0 & 1 & 0 \\
				1 & 0 & 0 & 0 & 0 \\
				0& 1 & 0 & 0 & 0 \\
				0& 0 & 0 & 0 & 1
			\end{pmatrix}
			\begin{pmatrix}
				1 & 0 & 0 & 0 & 0 \\
				0 & 0 & 1 & 0 & 0 \\
				0 & 1 & -1 & 0 & 1 \\
				0 & 0 & 0 & 1 & 0 \\
				0 & 0 & 1 & 0 & 0
			\end{pmatrix}
			\begin{pmatrix}
				0 & 1 & 0 & 0 & 0 \\
				1 & -1 & 1 & 0 & 0\\
				0 & 1 & -1 & 1 & 0 \\
				0& 0 & 1 & -1 & 1 \\
				0 & 0 & 0 & 1 & 0
			\end{pmatrix}
			\begin{pmatrix}
				0 & 0 & 0& 0 & 1 \\
				1 & 0 & 0 & 0 & 0 \\
				0 & 1 & 0 & 0 & 0 \\
				0 & 0 & 1 & 0 & 0 \\
				0 & 0 & 0 & 1 & 0 
			\end{pmatrix}
		\end{equation*}
	\end{center}
	\caption{Some example Dyck islands with inversion number 4 and their corresponding alternating sign matrices.}
	\label{fig:DIinv}
\end{figure}

\section{Definitions and the Alternating Sign Matrix - Fully Packed Loop - Dyck Island correspondence }
\label{sec:defn}
Let us now clarify the terminology which will be used throughout the paper. We wish to emphasize the harmony of the different representations of fully packed loops. In Section \ref{sec:inv}, we plan to use several representations at once, particularly in the proof of the main theorem. 
\begin{defn}
	A \emph{fully packed loop} of size $n$ is a connected graph arranged in an $n\times n$ grid such that there are $n^2$ internal vertices of degree $4$, and $4n$ external vertices of degree 1. Edges of the graph are colored either light or dark such that all internal vertices are incident to two light edges and two dark edges--this is the six-vertex condition (see Figure \ref{fig:6v}). Furthermore, edges incident to the vertices of degree 1 alternate in color in the manner seen in Figure \ref{fig:phi}. These are the domain wall boundary conditions. The set of all fully packed loops of size $n$ will be denoted $\FPL{n}$.
\end{defn}
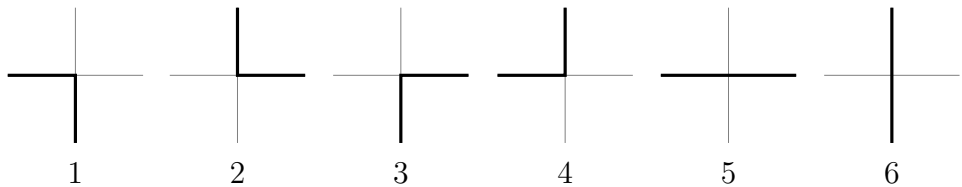
\begin{figure}[h]
	\begin{center}
		\begin{tikzpicture}
			\draw[step=1cm,gray,very thin] (-.9,-.9) grid (.9,.9);
			\draw[very thick] (-.9,0) -- ++(.9,0) -- ++(0,-.9);
			\node at (0,-1.3) {$1$};
		\end{tikzpicture}
		$\,$
		\begin{tikzpicture}
			\draw[step=1cm,gray,very thin] (-.9,-.9) grid (.9,.9);
			\draw[very thick] (.9,0) -- ++(-.9,0) -- ++(0,.9);
			\node at (0,-1.3) {$2$};
		\end{tikzpicture}
		$\,$
		\begin{tikzpicture}
			\draw[step=1cm,gray,very thin] (-.9,-.9) grid (.9,.9);
			\draw[very thick] (.9,0) -- ++(-.9,0) -- ++(0,-.9);
			\node at (0,-1.3) {$3$};
		\end{tikzpicture}
		$\,$
		\begin{tikzpicture}
			\draw[step=1cm,gray,very thin] (-.9,-.9) grid (.9,.9);
			\draw[very thick] (-.9,0) -- ++(.9,0) -- ++(0,.9);
			\node at (0,-1.3) {$4$};
		\end{tikzpicture}
		$\,$
		\begin{tikzpicture}
			\draw[step=1cm,gray,very thin] (-.9,-.9) grid (.9,.9);
			\draw[very thick] (-.9,0) -- ++(1.8,0);
			\node at (0,-1.3) {$5$};
		\end{tikzpicture}
		$\,$
		\begin{tikzpicture}
			\draw[step=1cm,gray,very thin] (-.9,-.9) grid (.9,.9);
			\draw[very thick] (0,-.9) -- ++(0,1.8);
			\node at (0,-1.3) {$6$};
		\end{tikzpicture}
	\end{center}
	\caption{The six-vertex condition}
	\label{fig:6v}
\end{figure}
\begin{defn}
	An \emph{alternating sign matrix} of size $n$ is an $n\times n$ matrix with entries $0$, $1$, or $-1$ such that each row sum is equal to 1, each column sum is equal to 1, and the non-zero entries alternate in sign along both rows and columns.
\end{defn}
Figure \ref{fig:asm} gives two example alternating sign matrices.

\begin{defn}
	Given a fixed alternating sign matrix, $A$, a \emph{diagonal one} of $A$ is an entry along the diagonal which takes the value $1$.
\end{defn}

It is well known (see \cite{ProppManyFaces} for a review) that there exists a bijection between fully packed loops of size $n$ and alternating sign matrices of size $n$. Vertices of type $1-4$ correspond to 0, and vertices of type $5$ and $6$ correspond to 1 and $-1$ subject to the alternating sign condition. 
\begin{figure}[h]
	\begin{center}
		\begin{equation*}
			\begin{pmatrix}
				1 & 0 & 0 & 0 & 0 \\
				0 & 0 & 1 & 0 & 0  \\
				0 & 1 & -1 & 0 & 1  \\
				0 & 0 & 1 & 0 & 0  \\
				0 & 0 & 0 & 1 & 0 
			\end{pmatrix}
			\begin{pmatrix}
				0 & 0 & 0 & 0 & 1 \\
				0 & 0 & 1 & 0 & 0  \\
				0 & 1 & 0 & 0 & 0  \\
				1 & -1 & 0 & 0 & 0  \\
				0 & 1 & 0 & 0 & 0  
			\end{pmatrix}
		\end{equation*}
	\end{center}
	\caption{Two example alternating sign matrices}
	\label{fig:asm}
\end{figure}
\begin{defn}
	Let $\phi_0$ be the fully packed loop corresponding to the identity matrix. 
	Likewise, let $\phi_1$ be the fully packed loop corresponding to the skew-identity matrix. 
\end{defn}
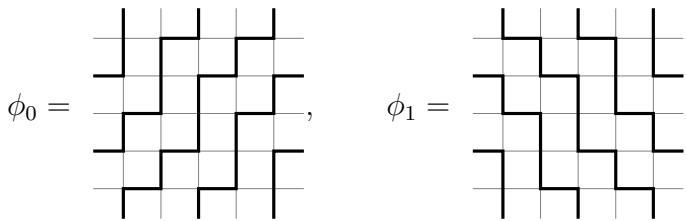
\begin{figure}[h]
	\begin{center}
		$\phi_0=$
		\raisebox{-1.4cm}{
		\begin{tikzpicture}[fpl/.style={very thick}]
			\draw[step=.5cm,gray,very thin] (-1.4,-1.4) grid (1.4,1.4);
			\draw[fpl] (-1,1.4) -- (-1,0.5) -- (-1.4,0.5);
			\draw[fpl] (0,1.4) -- ++(0,-.4) -- ++(-.5,0) -- ++(0,-1) -- ++(-.5,0) -- ++(0,-.5) -- ++(-.4,0);
			\draw[fpl] (1,1.4) -- ++(0,-.4) -- ++(-.5,0)-- ++(0,-.5) -- ++(-.5,0) -- ++(0,-1) -- ++(-.5,0) -- ++(0,-.5) -- ++(-.5,0) -- ++(0,-.4);
			\draw[fpl] (1.4,.5) -- ++(-.4,0) -- ++(0,-.5) -- ++(-.5,0) -- ++(0,-1) -- ++(-.5,0) -- ++(0,-.4);
			\draw[fpl] (1,-1.4) -- (1,-.5) -- (1.4,-.5);
		\end{tikzpicture}},
		$\qquad \phi_1=$
		\raisebox{-1.4cm}{
		\begin{tikzpicture}[fpl/.style={very thick}]
			\draw[step=.5cm,gray,very thin] (-1.4,-1.4) grid (1.4,1.4);
			\draw[fpl] (-1.4,-.5) -- ++(0.4,0) -- ++(0,-.9);
			\draw[fpl] (-1.4,.5) -- ++(.4,0) -- ++(0,-.5) -- ++(.5,0) -- ++(0,-1) -- ++(.5,0) -- ++(0,-.4);
			\draw[fpl] (-1,1.4) -- ++(0,-.4) -- ++(.5,0)-- ++(0,-.5) -- ++(.5,0) -- ++(0,-1) -- ++(.5,0) -- ++(0,-.5) -- ++(.5,0) -- ++(0,-.4);
			\draw[fpl] (0,1.4)-- ++(0,-.4) -- ++(.5,0) -- ++(0,-1) -- ++(.5,0) -- ++(0,-.5) -- ++(.4,0);
			\draw[fpl] (1,1.4) -- ++(0,-.9) -- ++(.4,0);
		\end{tikzpicture}}
	\end{center}
	\caption{The fully packed loops $\phi_0$ and $\phi_1$.}
	\label{fig:phi}
\end{figure}
\begin{defn}
	An \emph{alternating path} is a collection of edges of a fully packed loop which form lattice path loops and for which the edge color alternates. An \emph{alternating loop} is a single loop which has alternating edge colors. Thus, an alternating path is a union of alternating loops. Figure \ref{fig:alt} gives examples of alternating paths in a $5\times5$ fully packed loop corresponding to the alternating sign matrices in Figure \ref{fig:asm}.
\end{defn}
\begin{defn}
	Let $p = \cup_i \gamma_i$ be a union of one or more lattice path loops, $\gamma_i$. Define the flip of $\gamma_i$ to be the map of $\FPL{n}$ to itself which flips the colors of the edges of $\gamma_i$ from light to dark and vice versa if $\gamma_i$ is an alternating loop, and does nothing if $\gamma_i$ is not an alternating loop. We define the \emph{flip of $p$} to be the map from $\FPL{n}$ to itself which flips all $\gamma_i$ which are alternating. A \emph{plaquette flip} is a flip of a loop surrounding a $1\times 1 $ box. 
\end{defn}
\begin{figure}[h]
	\begin{center}
		\begin{tikzpicture}[fpl/.style={very thick},
			alt/.style={line width=1mm,color=red,opacity=.4}]
			\draw[step=.5cm,gray,very thin] (-1.4,-1.4) grid (1.4,1.4);
			\draw[fpl] (-1,1.4) -- (-1,0.5) -- (-1.4,0.5);
			\draw[fpl] (0,1.4) -- ++(0,-.4) -- ++(-.5,0) -- ++(0,-1) -- ++(-.5,0) -- ++(0,-.5) -- ++(-.4,0);
			\draw[fpl] (1,1.4) -- ++(0,-.4) -- ++(-.5,0)-- ++(0,-.5) -- ++(-.5,0) -- ++(0,-1) -- ++(-.5,0) -- ++(0,-.5) -- ++(-.5,0) -- ++(0,-.4);
			\draw[fpl] (1.4,.5) -- ++(-.4,0) -- ++(0,-.5) -- ++(-.5,0) -- ++(0,-1) -- ++(-.5,0) -- ++(0,-.4);
			\draw[fpl] (1,-1.4) -- (1,-.5) -- (1.4,-.5);
			\draw[alt] (0,0) rectangle (-.5,.5);
			\draw[alt] (0,0) -- ++(1,0) -- ++(0,-1) -- ++(-.5,0) -- ++(0,.5) -- ++(-.5,0) -- cycle;
		\end{tikzpicture}
		$\qquad$
		\begin{tikzpicture}[fpl/.style={very thick},
			alt/.style={line width=1mm,color=red,opacity=.4}]
			\draw[step=.5cm,gray,very thin] (-1.4,-1.4) grid (1.4,1.4);
			\draw[fpl] (-1,1.4) -- (-1,0.5) -- (-1.4,0.5);
			\draw[fpl] (0,1.4) -- ++(0,-.4) -- ++(-.5,0) -- ++(0,-1) -- ++(-.5,0) -- ++(0,-.5) -- ++(-.4,0);
			\draw[fpl] (1,1.4) -- ++(0,-.4) -- ++(-.5,0)-- ++(0,-.5) -- ++(-.5,0) -- ++(0,-1) -- ++(-.5,0) -- ++(0,-.5) -- ++(-.5,0) -- ++(0,-.4);
			\draw[fpl] (1.4,.5) -- ++(-.4,0) -- ++(0,-.5) -- ++(-.5,0) -- ++(0,-1) -- ++(-.5,0) -- ++(0,-.4);
			\draw[fpl] (1,-1.4) -- (1,-.5) -- (1.4,-.5);
			\draw[alt] (0,0) rectangle (-.5,.5);
			\draw[alt] (-1,1) -- ++(2,0) -- ++(0,-2) -- ++(-1.5,0) -- ++(0,.5) -- ++(-.5,0) -- cycle;
		\end{tikzpicture}
	\end{center}
	\caption{Alternating paths on $\phi_0$.}
	\label{fig:alt}
\end{figure}
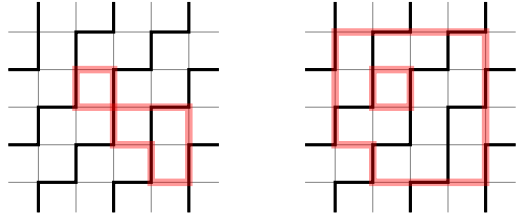

\begin{defn}
	A \emph{Dyck island} of size $n+1$ is an $n\times n$ tableau filled with entries, $\delta_{ij}$ for $1 \leq i,j \leq n$, from $\{0,1,2,3, \ldots\}$ according to the following rules:
	\begin{itemize}
		\item $\delta_{ij} \geq \delta_{i^\prime,j^\prime}$ whenever $i \geq j, i\leq i^\prime$, and $j \geq j^\prime$ 
		\item $\delta_{ij} \geq \delta_{i^\prime, j^\prime}$ whenever $i \leq j, i \geq i^\prime$ and $j \leq j^\prime$
		\item $\delta_{ij} = 0 \textrm{ or } 1 $ if $i,j \in \{1,n\}$
		\item $|\delta_{ij}-\delta_{(i+1)j}| \leq 1$ and  $|\delta_{ij}-\delta_{i(j+1)}| \leq 1$.
	\end{itemize}
	\label{def:di}
\end{defn}
In words, the above definition tells us the following: Fix a box on the diagonal. Entries of boxes above and to the right are weakly decreasing, by increments of at most 1 per step. Likewise, entries of boxes below and to the left are weakly decreasing, by increments of at most 1 per step. 
Furthermore, boxes along the boundary can only take the values 0 or 1. Superimposing a Dyck island over the fully packed loop $\phi_0$ specifies an alternating path along the boundaries of level sets where the value inside a box in a Dyck island indicates the number of alternating paths which contain the box. Figure \ref{fig:exampleDI} shows the two Dyck islands corresponding to the alternating paths of Figure \ref{fig:alt}. 

\begin{figure}[h]
	\begin{center}
		\begin{tikzpicture}
			\draw[step=.5cm,gray,very thin] (-1,-1) grid (1,1);
			\node at (-.75,.75) {$0$};
			\node at (-.25,.75) {$0$};
			\node at (.25,.75) {$0$};
			\node at (.75,.75) {$0$};
			\node at (-.75,.25) {$0$};
			\node at (-.25,.25) {$1$};
			\node at (.25,.25) {$0$};
			\node at (.75,.25) {$0$};
			\node at (-.75,-.25) {$0$};
			\node at (-.25,-.25) {$0$};
			\node at (.25,-.25) {$1$};
			\node at (.75,-.25) {$1$};
			\node at (-.75,-.75) {$0$};
			\node at (-.25,-.75) {$0$};
			\node at (.25,.-.75) {$0$};
			\node at (.75,-.75) {$1$};
			\draw[very thick] (0,0) rectangle (-.5,.5);
			\draw[very thick] (0,0) -- ++(1,0) -- ++(0,-1) -- ++(-.5,0) -- ++(0,.5) -- ++(-.5,0) -- cycle;
		\end{tikzpicture}
		$\qquad$
		\begin{tikzpicture}
			\draw[step=.5cm,gray,very thin] (-1,-1) grid (1,1);
			\node at (-.75,.75) {$1$};
			\node at (-.25,.75) {$1$};
			\node at (.25,.75) {$1$};
			\node at (.75,.75) {$1$};
			\node at (-.75,.25) {$1$};
			\node at (-.25,.25) {$2$};
			\node at (.25,.25) {$1$};
			\node at (.75,.25) {$1$};
			\node at (-.75,-.25) {$1$};
			\node at (-.25,-.25) {$1$};
			\node at (.25,-.25) {$1$};
			\node at (.75,-.25) {$1$};
			\node at (-.75,-.75) {$0$};
			\node at (-.25,-.75) {$1$};
			\node at (.25,.-.75) {$1$};
			\node at (.75,-.75) {$1$};
			\draw[very thick] (0,0) rectangle (-.5,.5);
			\draw[very thick] (-1,1) -- ++(2,0) -- ++(0,-2) -- ++(-1.5,0) -- ++(0,.5) -- ++(-.5,0) -- cycle;
		\end{tikzpicture}
	\end{center}
	\caption{Two example Dyck islands. Boundary paths have been drawn in bold.}
	\label{fig:exampleDI}
\end{figure}

It is possible to construct all Dyck islands inductively from the Dyck island of all zero entries according to the following local update rules:
\begin{itemize}
	\item
	$\raisebox{-1cm}{
	\begin{tikzpicture}[scale=1.3]
		\draw[gray, very thin] (-.75,-.25) rectangle (.75,.25);
		\draw[gray, very thin] (-.25,.75) rectangle (.25,-.75);
		\node at (0,0) {$i$};
		\node at (-.5,0) {$i$};
		\node at (.5,0) {$i$};
		\node at (0,-.5) {$i$};
		\node at (0,.5) {$i$};
	\end{tikzpicture}
	}
	\leftrightarrow
	\raisebox{-1cm}{
	\begin{tikzpicture}[scale=1.3]
		\draw[gray, very thin] (-.75,-.25) rectangle (.75,.25);
		\draw[gray, very thin] (-.25,.75) rectangle (.25,-.75);
		\node at (0,0) {\tiny{$i+1$}};
		\node at (-.5,0) {$i$};
		\node at (.5,0) {$i$};
		\node at (0,-.5) {$i$};
		\node at (0,.5) {$i$};
	\end{tikzpicture}
	}$ when the entry to be updated is on the diagonal.
\smallskip
		\item
	$\raisebox{-1cm}{
	\begin{tikzpicture}[scale=1.3]
		\draw[gray, very thin] (-.75,-.25) rectangle (.75,.25);
		\draw[gray, very thin] (-.25,.75) rectangle (.25,-.75);
		\node at (0,0) {$i$};
		\node at (-.5,0) {\tiny{$i+1$}};
		\node at (.5,0) {$i$};
		\node at (0,-.5) {\tiny{$i+1$}};
		\node at (0,.5) {$i$};
	\end{tikzpicture}
	}
	\leftrightarrow
	\raisebox{-1cm}{
	\begin{tikzpicture}[scale=1.3]
		\draw[gray, very thin] (-.75,-.25) rectangle (.75,.25);
		\draw[gray, very thin] (-.25,.75) rectangle (.25,-.75);
		\node at (0,0) {\tiny{$i+1$}};
		\node at (-.5,0) {\tiny{$i+1$}};
		\node at (.5,0) {$i$};
		\node at (0,-.5) {\tiny{$i+1$}};
		\node at (0,.5) {$i$};
	\end{tikzpicture}
	}$ when the entry to be updated is above the diagonal.
	\smallskip
	\item
	$\raisebox{-1cm}{
	\begin{tikzpicture}[scale=1.3]
		\draw[gray, very thin] (-.75,-.25) rectangle (.75,.25);
		\draw[gray, very thin] (-.25,.75) rectangle (.25,-.75);
		\node at (0,0) {$i$};
		\node at (-.5,0) {$i$};
		\node at (.5,0) {\tiny{$i+1$}};
		\node at (0,-.5) {$i$};
		\node at (0,.5) {\tiny{$i+1$}};
	\end{tikzpicture}
	}
	\leftrightarrow
	\raisebox{-1cm}{
	\begin{tikzpicture}[scale=1.3]
		\draw[gray, very thin] (-.75,-.25) rectangle (.75,.25);
		\draw[gray, very thin] (-.25,.75) rectangle (.25,-.75);
		\node at (0,0) {\tiny{$i+1$}};
		\node at (-.5,0) {$i$};
		\node at (.5,0) {\tiny{$i+1$}};
		\node at (0,-.5) {$i$};
		\node at (0,.5) {\tiny{$i+1$}};
	\end{tikzpicture}
	}$ when the entry to be updated is below the diagonal.
\end{itemize}
For the purpose of making sense of the update rules along the boundary, assume that the Dyck island has an additional first and last row of 0 entries and an additional first and last column of 0 entries. 

\begin{defn}
	Following \cite{ProppManyFaces}, the \emph{height function} representation of an $n\times n$ alternating sign matrix, $A_{i^\prime j^\prime}$, is an $(n+1) \times (n+1)$ matrix
\begin{equation*}
	h_{ij} = i+j - 2 \left(\sum_{i^\prime = 1}^i \sum_{j^\prime = 1}^j A_{i^\prime j^\prime} \right)
\end{equation*}
where $0\leq i,j \leq n$, and the sums are to be zero when $i=0$ or $j=0$, so that $h_{ij}=i+j$ whenever $i=0$ or $j=0$. See Figure \ref{fig:heightFunc} to see examples of height functions which correspond to the Dyck islands in Figure \ref{fig:exampleDI}.
\end{defn}
\begin{figure}[h]
	\begin{center}
		\begin{equation*}
			\begin{pmatrix}
				0 & 1 & 2 & 3 & 4 & 5 \\
				1 & 0 & 1 & 2 & 3 & 4 \\
				2 & 1 & 2 & 1 & 2 & 3 \\
				3 & 2 & 1 & 2 & 3 & 2 \\
				4 & 3 & 2 & 1 & 2 & 1 \\
				5 & 4 & 3 & 2 & 1 & 0 
			\end{pmatrix}
			\begin{pmatrix}
				0 & 1 & 2 & 3 & 4 & 5 \\
				1 & 2 & 3 & 4 & 5 & 4 \\
				2 & 3 & 4 & 1 & 4 & 3 \\
				3 & 4 & 3 & 2 & 3 & 2 \\
				4 & 3 & 4 & 3 & 2 & 1 \\
				5 & 4 & 3 & 2 & 1 & 0 
			\end{pmatrix}
		\end{equation*}
	\end{center}
	\caption{Two example height functions}
	\label{fig:heightFunc}
\end{figure}

In \cite{LS}, Lascoux and Schutzenberger showed that monotone triangles (yet another object in bijection with alternating sign matrices--see \cite{ProppManyFaces} for examples) satisfy an interesting lattice structure, which by the above bijection, carries through to the height function representation. The infimum and supremum of the entire set is the identity and skew-identity, respectively. In the height function representation, there is a particularly easy interpretation of the lattice structure: two height functions, $h^{(1)}$ and $h^{(2)}$, satisfy $h^{(1)} \leq h^{(2)}$ if and only if  $h^{(1)}_{ij} \leq h^{(2)}_{ij}$ for $0\leq i,j \leq n$. What is more, entries of the height function differ by even numbers, and the border entries ($i,j \in {0,n}$) remain constant. This motivates the following formula which gives a bijection between Dyck islands and height functions:

Let $h^{(0)}$ correspond to the minimal $(n+1) \times (n+1)$ height function (which corresponds to the identity matrix in the alternating sign matrix picture) and let $h$ be any given $(n+1) \times (n+1)$ height function. Then we get a corresponding Dyck island $\delta$ via
\begin{equation*}
	\delta_{ij} = \frac 1 2 (h_{ij} - h^{(0)}_{ij}) \textrm{ where } 1 \leq i, j \leq n-1 . 
\end{equation*}
Because the entries with $i \in \{ 0 , n+1\}$ or $j \in \{0, n+1\}$ remain constant in the height function representation, it is clear that the above map is a bejection. We get the following proposition.
\begin{prop}
	Dyck islands are in bijection with fully packed loops and alternating sign matrices.
\end{prop}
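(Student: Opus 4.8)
The plan is to factor the desired bijection through the chain
\[
\FPL{n} \;\longleftrightarrow\; \{\text{ASMs}\} \;\longleftrightarrow\; \{\text{height functions}\} \;\longleftrightarrow\; \{\text{Dyck islands}\},
\]
and, since the first correspondence is the classical one recalled above, it suffices to establish the last two. The middle bijection is immediate: the defining formula for $h_{ij}$ rearranges to $\sum_{i'\le i,\,j'\le j} A_{i'j'} = \tfrac12(i+j-h_{ij})$, and applying the mixed second difference in $i$ and $j$ recovers each $A_{ij}$ uniquely, so $A\mapsto h$ is invertible. Everything thus reduces to showing that the explicit affine map $\delta_{ij}=\tfrac12(h_{ij}-h^{(0)}_{ij})$, with candidate inverse $h_{ij}=2\delta_{ij}+h^{(0)}_{ij}$, restricts to a pair of maps between the set of height functions and the set of Dyck islands. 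Because these two formulas are literal inverses of one another as maps of integer arrays, the only real content is to check that each sends its source into the intended target; injectivity and surjectivity then follow formally, since $G\circ F$ and $F\circ G$ are the identity wherever both are defined.

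For the forward direction I would first record that $h^{(0)}$, the height function of the identity, satisfies $h^{(0)}_{ij}=|i-j|$, because the partial sums of the identity matrix equal $\min(i,j)$. Well-definedness of $\delta$ is then quick: $\delta_{ij}\in\ints$ since $h_{ij}$ and $h^{(0)}_{ij}$ are both congruent to $i+j \pmod 2$, and $\delta_{ij}\ge 0$ since $h^{(0)}$ is the infimum of the lattice of height functions, whence $h_{ij}\ge h^{(0)}_{ij}$ pointwise. The step bound $|\delta_{ij}-\delta_{(i+1)j}|\le 1$ (and its column analogue) holds because both $h$ and $h^{(0)}$ change by exactly $\pm1$ between adjacent cells—the former being the standard $\pm1$ property of height functions, following from the fact that partial column sums of an ASM lie in $\{0,1\}$—so that $2(\delta_{(i+1)j}-\delta_{ij})=(h_{(i+1)j}-h_{ij})-(h^{(0)}_{(i+1)j}-h^{(0)}_{ij})\in\{-2,0,2\}$. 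The boundary condition $\delta_{ij}\in\{0,1\}$ comes from this same computation applied one step off the frozen boundary of $h$, where the difference of two $\pm1$ steps lands in $\{0,1\}$ after halving.

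The crux is the monotonicity conditions, and it is here that the reference function $h^{(0)}_{ij}=|i-j|$ does the work, via a case analysis organized by position relative to the diagonal. For a cell strictly below the diagonal ($i>j$), stepping toward the diagonal decreases $h^{(0)}$ by $1$; combined with the $\pm1$ step of $h$ this forces the corresponding $\delta$-step into $\{0,1\}$, i.e. $\delta$ is weakly increasing as one approaches the diagonal and weakly decreasing as one moves down-and-left away from it, and iterating these single-step inequalities yields exactly the two monotonicity bullets of Definition~\ref{def:di}. The only delicate point is crossing or sitting on the diagonal, where $|i-j|$ has its kink; this is handled by noting that a cell with $i=j$ satisfies both $i\ge j$ and $i\le j$ and so may serve as the reference for either bullet, which supplies the needed single-step comparisons there. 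Conversely, for the backward direction the same identity is read in reverse: given a Dyck island, the monotonicity conditions guarantee that the $\delta$-step and the $h^{(0)}$-step always carry the correct relative sign, so that $h_{(i+1)j}-h_{ij}=2(\delta_{(i+1)j}-\delta_{ij})+(h^{(0)}_{(i+1)j}-h^{(0)}_{ij})$, a priori in $\{-3,-1,1,3\}$, never equals $\pm3$ and is therefore exactly $\pm1$, while $\delta=0$ along the border restores the frozen boundary of $h$. The main obstacle, then, is not any single hard estimate but this bookkeeping: verifying that the four local Dyck-island axioms are collectively equivalent to the ``$\pm1$ neighboring steps'' characterization of height functions, with the diagonal kink of $h^{(0)}$ as the one case requiring care.
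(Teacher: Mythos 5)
Your proof is correct and follows essentially the same route as the paper: the classical fully packed loop--alternating sign matrix correspondence composed with the standard height-function bijection, then the affine map $\delta_{ij}=\tfrac12\bigl(h_{ij}-h^{(0)}_{ij}\bigr)$ between height functions and Dyck islands. The paper simply asserts this last map is a bijection (invoking the frozen boundary, evenness of differences, and the lattice infimum $h^{(0)}$), so your explicit verification---computing $h^{(0)}_{ij}=|i-j|$ and checking the step bounds, boundary values, and monotonicity axioms against the diagonal kink, together with the converse direction---just fills in the details the paper leaves as ``clear.''
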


We now introduce terminology which is useful for describing any specified Dyck island.
\begin{defn}
	A \emph{Dyck word of semilength $n$} is a word of length $2n$ from the alphabet $\{u,d\}$ such that the number of `$u$' and `$d$' are equal, and in all of the subwords consisting of the first consecutive $i$ letters, the number of `$u$' always exceeds or is equal to the number of `$d$' for each $i$ in the range $1\leq i \leq 2n$. 
\end{defn}
\begin{defn}
	A \emph{Dyck path of semilength $n$} is a lattice path in a finite size square lattice constructed from a Dyck word of semilength $n$ in which the path begins at some point along the diagonal and the letters $\{u,d\}$ are interpreted as $\{$right, down$\}$ for paths above the diagonal or as $\{$down, right$\}$ for paths below the diagonal. By construction, a Dyck path begins and ends on the diagonal, and never crosses the diagonal. In other words, we fix whether our path is above the diagonal or below the diagonal and interpret $u$ to be a move away from the diagonal and $d$ to be a move toward the diagonal.
\end{defn}

\begin{defn}
	Given any Dyck island of size $n$, we notice that entries take values in the set $\{0, 1, \ldots , \lceil\frac{n}{2} \rceil\}$. Consider the union of all boxes labelled $i$ such that $1 \leq i \leq \lceil \frac{n}{2} \rceil$. By construction, these regions are bounded by boxes labelled $i-1$ or $i+1$. The union of all edges between such regions for all $i \in \{0,1,\ldots, \lceil \frac{n}{2} \rceil \}$ are lattice path loops, which we call \emph{boundaries} or \emph{boundary paths}. By specifying all boundaries, one can retrieve the entries of the Dyck island by inserting in each entry the number of boundaries which contain the box (in the interior of the boundary) in consideration. See Figure \ref{fig:exampleDI} for examples.
\end{defn}

Boundaries are a union of lattice path loops which are allowed to touch along vertices of the underlying graph. We fix the convention that we decompose the boundaries of a Dyck island into loops which can be described by two Dyck paths of equivalent semilength: one which forms the northeastern side of the boundary, and the other which forms the southwestern side.  One can check that boundaries decompose into different Dyck paths under the local update rules for Dyck islands, but the existence of the Dyck path representation is preserved. 

\begin{defn}
	For any given lattice path loop, $\gamma$, which is assumed to form part of the boundary of a given Dyck island, we denote the northeast boundary of $\gamma$ with $\neb(\gamma)$ and we denote the southwestern boundary of $\gamma$ with $\swb(\gamma)$. Both $\neb(\gamma)$ and $\swb(\gamma)$ are Dyck paths. See Figure \ref{fig:nebswbdemo} for an example.
\end{defn}
\begin{defn}
	Let $\pi_1$ and $\pi_2$ be two distinct Dyck paths which are part of the boundary of a given Dyck island (possibly from the same loop). An \emph{osculation} is a point of the lattice which $\pi_1$ and $\pi_2$ share in common. 
\end{defn}

In order to eliminate ambiguities in decomposing the boundaries, we fix the convention that loops with osculations along the diagonal cannot be broken up into smaller loops. For example, in Figure \ref{fig:DIinv}, the first Dyck island is described by two loops and the remaining three are described by one loop. Figure \ref{fig:nebswbdemo} gives another example.

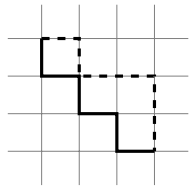
\begin{figure}[h]
	\begin{center}
		\begin{tikzpicture}[scale=.5]
			\draw[very thin, gray] (-.9,-.9) grid (3.9,3.9);
			\draw[very thick, dashed] (0,3) -- (1,3) -- (1,2) -- (3,2) -- (3,0);
			\draw[very thick] (0,3) -- (0,2) -- (1,2) -- (1,1) -- (2,1) -- (2,0) -- (3,0);
		\end{tikzpicture}
	\end{center}
	\caption{An example lattice path loop of semilength $3$ in the boundary of a Dyck island. The northeast boundary of the given lattice loop is labelled with a dashed path. The southwest bondary is labelled with a solid path. Note that by our convention, we will always describe these boundaries as a single loop rather than two loops.}
	\label{fig:nebswbdemo}
\end{figure}

Observe that distinct Dyck paths forming the boundary of a Dyck island may not share an edge because this would violate the fourth condition of Definition \ref{def:di}. Hence the Dyck paths which form the boundaries of Dyck islands only touch at isolated points.

We now wish to demonstrate how the notion of a Dyck island is related to the set of alternating paths on the fully packed loop $\phi_0$ of arbitrary size $n$. We will find that by reinterpreting alternating paths as the union of lattice path loops in the square lattice of size $n$, we recover the boundary of a Dyck island. Though Dyck islands are perhaps most simply defined via the connection to height functions, their discovery arose through the study of alternating paths.

\begin{lemma}
	Any alternating loop can be constructed by some sequence of plaquette flips. 
	\label{lem:plaquetteConstruction}
\end{lemma}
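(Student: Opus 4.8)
The plan is to translate everything into the height function language introduced just above the lemma, where both plaquette flips and flips of alternating loops become transparent elementary operations, and then to invoke a standard connectivity argument for height functions. First I would fix the dictionary. To an FPL I associate its height function $h$ on the $(n+1)\times(n+1)$ array of lattice points, where the colours of the edges crossed between adjacent points record the $\pm 1$ increments (a one-step move in either coordinate changes $h$ by exactly $\pm 1$, since the relevant partial column or row sum of the ASM is $0$ or $1$). Under this dictionary a \emph{plaquette flip} at an interior point $(i,j)$ is exactly the operation of changing $h_{ij}$ by $\pm 2$, and such a move is admissible precisely when $(i,j)$ is a strict local extremum of $h$ (all four neighbours equal to $h_{ij}-1$, or all four equal to $h_{ij}+1$), which is the same as saying the four edges around the $1\times 1$ box alternate in colour. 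Flipping a single alternating loop $\gamma$ with enclosed region $R$ is, on the other hand, exactly the operation of adding $+2$ (or $-2$) to $h_{ij}$ for every lattice point interior to $\gamma$ while fixing every other entry; the result is again a genuine height function precisely because $\gamma$ is alternating. Writing $h$ for the starting height function and $h'$ for the one obtained by flipping $\gamma$, the lemma becomes the assertion that $h$ and $h'$ can be joined by a sequence of admissible single-point $\pm 2$ moves.

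After this reduction it suffices to prove the following connectivity statement: if $h \le h'$ are two height functions with the same boundary, then one can pass from $h$ to $h'$ by repeatedly raising a single strict local minimum by $2$. (The case where $\gamma$ is flipped downward follows by running such a sequence backward, since plaquette flips are involutions.) The key claim is that, whenever $h \ne h'$, among the points where $h < h'$ there is one that is a strict local minimum of $h$; after raising it, the nonnegative integer $\sum_{i,j}(h'_{ij}-h_{ij})$ strictly decreases, so the procedure terminates at $h'$ and produces a bona fide FPL at every stage.

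To prove the claim I would choose a point $(i,j)$ with $h_{ij} < h'_{ij}$ of \emph{smallest} value $h_{ij}$; such a point is necessarily interior, since the boundary entries of $h$ and $h'$ coincide, so all four of its neighbours are present. Because the entries of two height functions differ by even numbers, $h'_{ij} \ge h_{ij}+2$. If some neighbour $(i',j')$ satisfied $h_{i'j'} = h_{ij}-1$, then $h'_{i'j'} \ge h'_{ij}-1 \ge h_{ij}+1 > h_{i'j'}$, so $(i',j')$ would also be a point where $h < h'$ but with strictly smaller height, contradicting minimality. Hence every neighbour equals $h_{ij}+1$, so $(i,j)$ is a strict local minimum and may be raised by an admissible plaquette flip, as needed.

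The main obstacle is not the connectivity argument, which is routine, but pinning down the dictionary of the first paragraph carefully: one must verify that flipping the colours along an alternating loop changes $h$ by exactly $\pm 2$ on the enclosed faces and by $0$ elsewhere, that admissibility of a plaquette flip corresponds exactly to a face being a strict local extremum, and that every intermediate configuration produced by the greedy procedure is a genuine FPL. Each of these follows directly from the six-vertex rules and the definition of $h$, but they are the statements that make the reduction legitimate. A purely combinatorial alternative---peeling off convex corners of $R$ by induction on its area---looks tempting but is in fact more delicate, because a convex-corner box of $R$ need not have an alternating plaquette; the height-function formulation sidesteps this by guaranteeing a flippable face \emph{globally} (the lowest differing point) rather than relying on local corner geometry.
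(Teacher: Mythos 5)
You take a genuinely different route from the paper (which proves that every box inside the loop is ``accessible'' by an induction that cuts the interior along an auxiliary alternating path), and your greedy connectivity argument for comparable height functions is correct as far as it goes. The gap is in the dictionary itself: the claim that flipping an alternating loop $\gamma$ adds the \emph{same} $\pm 2$ to $h$ on every face enclosed by $\gamma$ is false in general. In this paper an alternating loop is allowed to osculate itself, i.e.\ to pass through a lattice vertex twice; this is exactly the convention stressed around Figure \ref{fig:DIinv}, whose second Dyck island is ``described by one loop,'' namely two squares meeting at a diagonal vertex. For such a loop the enclosed region is disconnected, and what the six-vertex rules actually give is only that $h'-h$ is a constant $\pm 2$ on \emph{each} connected component of the interior; nothing forces the signs of different components to agree.

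And they need not agree. Take $n=3$ and the permutation matrix $A$ with $A_{12}=A_{21}=A_{33}=1$, whose height function has interior entries $h_{11}=2$, $h_{12}=h_{21}=1$, $h_{22}=0$. In the corresponding fully packed loop, the eight edges bounding the two interior boxes form a single alternating loop passing twice through the central vertex (the colours alternate around the length-$8$ circuit), and flipping it produces the matrix with ones at $(1,1)$, $(2,3)$, $(3,2)$, i.e.\ $h'_{11}=0$ and $h'_{22}=2$: one lobe moves down while the other moves up. Here $h$ and $h'$ are incomparable, so the reduction ``WLOG $h\le h'$, raise strict local minima; run the sequence backwards for downward flips'' never gets started. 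The repair is cheap, and either of two amendments works: (i) split a self-osculating loop at each osculation point into simple alternating loops (each arc is a closed walk in a bipartite graph, hence of even length, hence again alternating; the arcs are edge-disjoint, so they may be flipped one after another), and apply your argument to each simple loop; or (ii) drop comparability and note that your minimality argument only uses evenness and the Lipschitz property: among the points where $h<h'$, the one of smallest height is a strict local minimum of $h$, and raising it leaves the set $\{h>h'\}$ untouched, so $\sum_{i,j}\lvert h'_{ij}-h_{ij}\rvert$ still decreases; then treat $\{h>h'\}$ dually by lowering strict local maxima. As written, however, the proof does not cover all the loops that the lemma is about.
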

\begin{proof}
	The following proof works for any given fully packed loop. With a fixed fully packed loop and alternating path in mind, it is clear that if we can apply a single plaquette flip to all boxes in the interior, then each interior edge is flipped twice, while each exterior edge is only flipped once. Thus, such a sequence of plaquette flips implements an alternating path. 
	
	Let us call a box \emph{accessible} if it can be flipped by a plaquette flip eventually, after some sequence of plaquette flips in the interior. We wish to demonstrate that all plaquette flips in the interior of an alternating path are accessible. The proof is by induction on the number of boxes in the interior. The case of one box is obvious, since this is simply a plaquette flip to begin with. The inductive step is demonstrated by cutting up the interior of the alternating path into two parts, where we cut along some alternating path. Then one of the two regions is bounded by an alternating path and the other region can be shown to be bounded by an alternating path upon a color flip operation applied to the cutting path. The number of boxes that each of these smaller alternating path bounds is smaller than $n$, therefore by the inductive hypothesis, all of the boxes within are accessible. 
	Lastly, to demonstrate that such an alternating cutting path exists, we make the observation that every edge is part of some alternating path, by the six-vertex condition. Then, if we pick any edge that is both incident to a vertex on the alternating path and in the interior of the alternating path and use this edge to find a new alternating cutting path, we see that the cutting path must be incident to our original alternating path in at least 2 points. 
\end{proof}
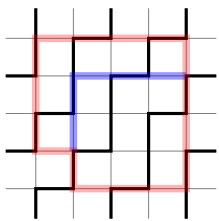
\begin{figure}[h]
	\begin{center}
				\begin{tikzpicture}[fpl/.style={very thick},
			alt/.style={line width=1mm,color=red,opacity=.3}]
			\draw[step=.5cm,gray,very thin] (-1.4,-1.4) grid (1.4,1.4);
			\draw[fpl] (-1,1.4) -- (-1,0.5) -- (-1.4,0.5);
			\draw[fpl] (0,1.4) -- ++(0,-.4) -- ++(-.5,0) -- ++(0,-1) -- ++(-.5,0) -- ++(0,-.5) -- ++(-.4,0);
			\draw[fpl] (1,1.4) -- ++(0,-.4) -- ++(-.5,0)-- ++(0,-.5) -- ++(-.5,0) -- ++(0,-1) -- ++(-.5,0) -- ++(0,-.5) -- ++(-.5,0) -- ++(0,-.4);
			\draw[fpl] (1.4,.5) -- ++(-.4,0) -- ++(0,-.5) -- ++(-.5,0) -- ++(0,-1) -- ++(-.5,0) -- ++(0,-.4);
			\draw[fpl] (1,-1.4) -- (1,-.5) -- (1.4,-.5);
			\draw[line width=1mm,color=blue,opacity=.4] (-.5,-.5) -- ++(0,1) -- ++(1.5,0);
			\draw[alt] (-1,1) -- ++(2,0) -- ++(0,-2) -- ++(-1.5,0) -- ++(0,.5) -- ++(-.5,0) -- cycle;
		\end{tikzpicture}
	\end{center}
	\caption{The path colored blue illustrates one possible cutting path.}
	\label{fig:cutting}
\end{figure}

Plaquette flips in the fully packed loop picture correspond to the local update rules in the Dyck island picture. This is the content of the next proposition.

\begin{prop}
	Fix a box $\alpha$ in the $n\times n$ square lattice. Let $f_\alpha$ be the operator acting on $\FPL{n}$ which implements a plaquette flip on the box $\alpha$ if it is surrounded by an alternating path and which does nothing otherwise. Let $u_\alpha$ be the operator on $(n-1) \times (n-1)$ Dyck islands which implements a local update at the box $\alpha$ (adds $\pm1$ to the box $\alpha$) if it is permissible, and does nothing otherwise. Then there exists a bijection $M$ from $\FPL{n}$ to the $(n-1)\times(n-1)$ Dyck islands such that 
	\begin{equation*}
		M\circ f_\alpha = u_\alpha \circ M.
	\end{equation*}
Furthermore, $M$ is the map which forgets information about color and reinterprets alternating paths of $\phi_0$ as boundary paths in Dyck islands.
\label{prop:bijectionM}
\end{prop}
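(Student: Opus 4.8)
The plan is to build $M$ out of the bijections already assembled in the paper and then to show that, once transported through $M$, the plaquette flip $f_\alpha$ and the local update $u_\alpha$ are literally the same operation on height functions. Concretely, I would set $M$ to be the composite
$\FPL{n}\to\mathrm{ASM}\to\{\text{height functions}\}\to\{\text{Dyck islands}\}$, where the first arrow is the vertex-type bijection, the second is the height-function formula, and the third is $\delta_{ij}=\tfrac{1}{2}(h_{ij}-h^{(0)}_{ij})$. Each arrow is a bijection, so $M$ is one too, and this is exactly the previous proposition. The remaining ``furthermore'' clause — that $M$ forgets color and reads alternating paths as boundaries — I would establish at the end by comparing $h$ with the minimal height function $h^{(0)}$: the edges on which $\phi$ and $\phi_0$ disagree in color form the alternating path $p$, erasing color turns $p$ into lattice loops, and crossing one such loop changes $\tfrac{1}{2}(h-h^{(0)})$ by $1$, so the number of loops of $p$ enclosing the box $(i,j)$ equals $\tfrac{1}{2}(h_{ij}-h^{(0)}_{ij})=\delta_{ij}$, which is the boundary reading of the island.

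The engine of the intertwining is the observation that both $u_\alpha$ and $f_\alpha$ are shadows of a single \emph{uniform} height move: change $h$ at the interior vertex $(i,j)$ by $\pm 2$ when that vertex is a local extremum (all four lattice neighbours equal), and do nothing otherwise. On the Dyck-island side this is a direct change of variables. Since $h^{(0)}_{ij}=|i-j|$, writing the local-extremum condition in the coordinates $\delta_{ij}=\tfrac{1}{2}(h_{ij}-|i-j|)$ reproduces verbatim the three patterns of Definition \ref{def:di}: the symmetric pattern on the diagonal and the two staircase patterns above and below it arise precisely because $h^{(0)}$ has a ridge along the diagonal, even though the height move itself is the same everywhere. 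The zero-padding convention stipulated after the update rules is exactly the statement that the border faces carry fixed height ($h=h^{(0)}$ there, hence $\delta\equiv 0$), which makes the border instances of this computation valid. Thus $u_\alpha$ is transported by the height–Dyck bijection to the height move at $(i,j)$.

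On the fully packed loop side the identification is geometric: the cell $\alpha$ being flipped is an interior face of the FPL, hence corresponds to an interior height vertex $(i,j)$, and the four edges bounding $\alpha$ are exactly the edges crossed in passing from $(i,j)$ to its four neighbouring faces, across each of which the height changes by $\pm 1$. Therefore $(i,j)$ is a local extremum if and only if all four increments carry the same sign, which is exactly the condition that the four edges alternate in color around $\alpha$ — the alternating-loop condition making $f_\alpha$ a genuine flip. Reversing the colors of that loop flips all four signs, changing $h_{ij}$ by $\pm 2$ while leaving every other height fixed. Hence $f_\alpha$ is also transported to the height move at $(i,j)$, and since both $M\circ f_\alpha$ and $u_\alpha\circ M$ equal ``apply the height move at $(i,j)$, then convert back,'' they agree, giving the claimed relation.

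The step I expect to fight hardest with is fixing, once and for all, the dictionary taking an FPL edge (its color together with its position) to a signed height increment, since the paper never spells this out; the whole argument rests on this rule being globally path-independent and on verifying $\phi_0\mapsto h^{(0)}$. Getting the sign convention right is what forces the correct one of the three patterns to appear and what makes the interior equivalence of the previous paragraph survive at the border faces (handled by zero-padding) and along the diagonal. The related delicate point is the ``furthermore'' identification: I must check that the color-difference loops of $\phi$ versus $\phi_0$ realize the boundary decomposition under the fixed convention that loops osculating on the diagonal are not broken apart, so that the forget-color map and the height composite really are the same bijection.
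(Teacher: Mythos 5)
Your proposal is correct, but it takes a genuinely different route from the paper's. The paper constructs $M$ directly in the fully packed loop picture: it forms the symmetric difference of $\phi$ with $\phi_0$ (always an alternating path, by the six-vertex condition), observes that its loops split into northeast and southwest Dyck paths because every off-diagonal vertex of $\phi_0$ is of type $3$ or $4$, and simply \emph{defines} $M$ to be this forget-color reinterpretation; bijectivity is then asserted from reversibility of the process, the ``furthermore'' clause holds by construction, and the intertwining $M\circ f_\alpha = u_\alpha\circ M$ is verified by a pictorial case analysis matching flippable (``accessible'') boxes with the three local update rules. You instead route everything through the height function: $M$ is a composite of previously established bijections (so bijectivity is free), and the intertwining becomes conceptual rather than case-by-case, since both $f_\alpha$ and $u_\alpha$ are exhibited as the single move ``add $\pm 2$ at a local extremum of $h$.'' Your computation that the three asymmetric-looking update patterns collapse to this one move in the coordinates $\delta_{ij}=\tfrac12(h_{ij}-|i-j|)$ is right, as is the identification of flippable plaquettes with local extrema (alternating colors around a face correspond to a directed cycle of six-vertex arrows). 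What your route buys is a uniform explanation of the update rules and an intertwining proof that does not rest on pictures; what it costs is (i) the edge-color-to-height-increment dictionary, which you correctly flag as a burden the paper never makes explicit, and (ii) the ``furthermore'' clause, which for the paper is a definition but for you is a claim requiring proof. On (ii) your sketch has one real sign gap: crossing a loop of the difference path changes $\tfrac12(h-h^{(0)})$ by $\pm 1$ a priori, not automatically by $+1$, and without fixing the sign you only control a signed count of enclosing loops, not the nesting number. This is patchable: each loop of the difference path is an alternating loop of $\phi_0$, hence carries a directed cycle of arrows; flipping it alone shifts all heights strictly inside by a uniform $\pm 2$, and the sign must be $+2$, since otherwise the resulting height function would dip below $h^{(0)}$, contradicting the minimality of $h^{(0)}$ in the Lascoux--Schutzenberger lattice order the paper invokes. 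Since the loops are edge-disjoint and each loop's shift depends only on its own arrow orientation, the shifts add, giving $\delta_{ij}$ equal to the number of enclosing loops and completing your identification of the composite with the forget-color map.
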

\begin{proof}
We will construct the bijection $M$. 

First, we establish a bijective correspondence between alternating paths of $\phi_0$ and the set of fully packed loops. Observe that a simple consequence of the six-vertex condition  and the boundary condition is that for any fixed fully packed loop $\phi$, all alternating paths close up into a union of  loops. Since the six vertex condition guarantees that if any two fully packed loops differ at a vertex, they must differ along two (one black and one white) or all four edges. It follows then that the set of all edges which differ between $\phi$ and $\phi_0$ (the fully packed loop corresponding to the identity matrix) is an alternating path which we denote $\tilde{\gamma}=\gamma_1 \cup \ldots \cup \gamma_\ell$.  Thus, we see that the flip of $\tilde{\gamma}$ maps $\phi$ to $\phi_0$ and vice versa. The upshot of this is that it is possible to obtain every fully packed loop as the flip of some alternating path of $\phi_0$. 

Next, we observe that an arbitrary alternating path loop, $\gamma$, in $\phi_0$ traverses at least two points along the diagonal, that $\gamma$ can be decomposed into a portion above the diagonal ($\neb(\gamma)$) and a portion below the diagonal ($\swb(\gamma)$), and that $\neb(\gamma)$ and $\swb(\gamma)$ are Dyck paths. This follows because all off-diagonal elements of $\phi_0$ are only of type $3$ or $4$ (see Figures \ref{fig:6v} and \ref{fig:phi}). Because alternating paths of $\phi_0$ are a union of alternating path loops $\gamma_1, \ldots, \gamma_\ell$, we see that by forgetting the coloring of a given alternating path we can reinterpret it as the boundary path of a Dyck island. 

We define our bijection $M$ from $\FPL{n}$ to $(n-1)\times(n-1)$ Dyck islands to be the map obtained from the following process: 
\begin{center}
	\begin{enumerate}
		\item Given $\phi$, find the corresponding alternating path of $\phi_0$. Call it $\tilde{\gamma}$. 
		\item Interpret $\tilde{\gamma}$ as a boundary path of a Dyck island, $\delta$ , and fill in the entries $\delta_{ij}$ according to the number of boundary path loops which contain the box $(i,j)$. 
	\end{enumerate}
\end{center}
	This process may be completed in reverse, so it follows that $M$ is a bijection.

	Lastly, in order to establish that 
	\begin{equation*}
		M\circ f_\alpha = u_\alpha \circ M.
	\end{equation*}
we simply observe that the local update rules for Dyck islands correspond to an application of $f_\alpha$ to some accessible box $\alpha$. This is because an accessible box $\alpha$ corresponds to a box of one of the three following types:
\begin{equation*}
	\raisebox{-.5cm}{
\begin{tikzpicture}[scale=.5]
	\draw[very thin, gray] (-.9,-.9) grid (1.9,1.9);
	\draw[very thick] (1,1.9) -- (1,1) -- (0,1) -- (0,0) -- (-.9,0);
	\draw[line width=1mm,color=red,opacity=.4] (0,1) -- (0,0) -- (1,0);
	\node at (.5,.5) {$\tiny{\alpha}$};
\end{tikzpicture}
} \leftrightarrow
	\raisebox{-.5cm}{
\begin{tikzpicture}[scale=.5]
	\draw[very thin, gray] (-.9,-.9) grid (1.9,1.9);
	\draw[very thick] (1,1.9) -- (1,1) -- (0,1) -- (0,0) -- (-.9,0);
	\draw[line width=1mm,color=red,opacity=.4] (0,1) -- (1,1) -- (1,0);
	\node at (.5,.5) {$\tiny{\alpha}$};
\end{tikzpicture}
}
\end{equation*}
\begin{equation*}
	\raisebox{-.5cm}{
\begin{tikzpicture}[scale=.5]
	\draw[very thin, gray] (-.9,-.9) grid (1.9,1.9);
	\draw[very thick] (1.9,1) -- (1,1) -- (1,0) -- (0,0) -- (0,-.9);
	\draw[line width=1mm,color=red,opacity=.4] (0,1) -- (0,0) -- (1,0);
	\node at (.5,.5) {$\tiny{\alpha}$};
\end{tikzpicture}
} \leftrightarrow
	\raisebox{-.5cm}{
\begin{tikzpicture}[scale=.5]
	\draw[very thin, gray] (-.9,-.9) grid (1.9,1.9);
	\draw[very thick] (1.9,1) -- (1,1) -- (1,0) -- (0,0) -- (0,-.9);
	\draw[line width=1mm,color=red,opacity=.4] (0,1) -- (1,1) -- (1,0);
	\node at (.5,.5) {$\tiny{\alpha}$};
\end{tikzpicture}
}
\end{equation*}
\begin{equation*}
	\raisebox{-.5cm}{
\begin{tikzpicture}[scale=.5]
	\draw[very thin, gray] (-.9,-.9) grid (1.9,1.9);
	\draw[very thick] (-.9,0) -- (0,0) -- (0,1.9);
	\draw[very thick] (1,-.9) -- (1,1) -- (1.9,1);
	\node at (.5,.5) {$\tiny{\alpha}$};
\end{tikzpicture}
} \leftrightarrow
	\raisebox{-.5cm}{
\begin{tikzpicture}[scale=.5]
	\draw[very thin, gray] (-.9,-.9) grid (1.9,1.9);
	\draw[very thick] (-.9,0) -- (0,0) -- (0,1.9);
	\draw[very thick] (1,-.9) -- (1,1) -- (1.9,1);
	\draw[line width=1mm,color=red,opacity=.4] (0,0) rectangle (1,1);
	\node at (.5,.5) {$\tiny{\alpha}$};
\end{tikzpicture}
}.
\end{equation*}
\end{proof}
Because it is more convenient for our analysis of the inversion number, in the remainder of the paper we use the Dyck islands picture. The following proposition explicitly establishes the correspondence between alternating sign matrices and Dyck islands.

\begin{prop}
	Let a Dyck island be described by a boundary path $\tilde{\gamma}=\gamma_1\cup \ldots \cup \gamma_\ell$. Let $w$ be the Dyck word corresponding to $\neb(\gamma_i)$ (respectively $\swb(\gamma_i)$). Let $v$ be a vertex of the underlying lattice corresponding to a consecutive subword $ud$ or $du$ of $\neb(\gamma_i)$ ($\swb(\gamma_i)$). Generically, $v$ corresponds to a $1$ if the subword is $ud$ and $v$ corresponds to a $-1$ if the subword is $du$. All other vertices correspond to $0$. The exceptional cases deal with vertices where distinct loops touch, and where loops traverse a vertex along the diagonal. They are:
	\begin{enumerate}
		\item If $v$ is a vertex along the diagonal which is traversed by either $\neb(\gamma_i)$ or $\swb(\gamma_i)$ but not both, then $v$ corresponds to a $0$ in the alternating sign matrix picture.
		\item If $v$ is a vertex along the diagonal which is traversed by both $\neb(\gamma_i)$ and $\swb(\gamma_i)$, then $v$ corresponds to a $-1$ in the alternating sign matrix picture. 
		\item If $v$ is a vertex not along the diagonal which is traversed by both $\neb(\gamma_i)$ and $\neb(\gamma_j)$ (or is traversed by both $\swb(\gamma_i)$ and $\swb(\gamma_j)$), then $v$ corresponds to a $0$ in the alternating sign matrix picture.
	\end{enumerate}
\end{prop}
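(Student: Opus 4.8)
The plan is to route everything through the height function, which is already known to determine the alternating sign matrix, and then to reduce the statement to a purely local computation at each lattice vertex. First I would recall the two formulas relating the three objects: the defining formula $h_{ij}=i+j-2\sum_{i'\le i,\,j'\le j}A_{i'j'}$ inverts, by inclusion--exclusion (a mixed second difference), to
\begin{equation*}
A_{ij} = -\tfrac12\left(h_{ij}-h_{i-1,j}-h_{i,j-1}+h_{i-1,j-1}\right),
\end{equation*}
while the Dyck island satisfies $h_{ij}=2\delta_{ij}+h^{(0)}_{ij}$. Combining these and writing $A^{(0)}$ for the identity matrix (the alternating sign matrix of $\phi_0$) yields the single identity on which I would build the whole argument:
\begin{equation*}
A_{ij} = A^{(0)}_{ij} - \left(\delta_{ij}-\delta_{i-1,j}-\delta_{i,j-1}+\delta_{i-1,j-1}\right),
\end{equation*}
valid at every lattice vertex $v$ once one identifies $v$ with the common corner of the four Dyck island boxes $\delta_{ij},\delta_{i-1,j},\delta_{i,j-1},\delta_{i-1,j-1}$. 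The content of the proposition is then exactly that this bracketed second difference is determined by the local shape of the boundary paths through $v$.

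Second I would record the dictionary between local path shape and the four surrounding box values, using the fact that $\delta$ counts how many boundary loops contain a box, so $\delta$ changes by exactly $\pm1$ each time one crosses a single boundary arc. For a vertex $v$ off the diagonal this settles the generic cases at once: a convex corner $ud$ has the enclosing (higher) region occupying a single quadrant, forcing the second difference to be $-1$ and hence $A_v=+1$; a concave corner $du$ has three quadrants high, the second difference is $+1$, and $A_v=-1$; and a straight passage, or a vertex lying on no path, has all four boxes equal, giving $A_v=0$. This is the generic clause.

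Third I would treat the exceptional clauses, which are where $A^{(0)}$ and the loop-decomposition convention enter. On the diagonal $A^{(0)}_{vv}=1$, so the generic second-difference values are shifted by one: a vertex touched by only one lobe of a loop (an endpoint where a single $\neb$ or $\swb$ turns around) has second difference $1$ and hence $A_v=1-1=0$, which is case (1); a vertex where two lobes meet along the diagonal (both $\neb$ and $\swb$ passing through, with the interior sitting in the two opposite quadrants) has second difference $2$ and hence $A_v=1-2=-1$, which is case (2). For case (3) I would check that two $\neb$'s (or two $\swb$'s) osculating off the diagonal correspond to nested regions carrying values $c,\,c+1,\,c+1,\,c+2$ around the four quadrants, whose mixed second difference vanishes, so $A_v=0$. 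I would verify each case against a minimal example — the single diagonal plaquette loop, which realizes the swap of two adjacent indices and pins down the orientation conventions.

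The hard part will be the bookkeeping on and near the diagonal: pinning down, for each diagonal vertex, exactly which of $\neb$ and $\swb$ is deemed to traverse it under the convention that loops osculating on the diagonal are not broken up, and checking that this convention is precisely what separates the one-lobe situation ($A_v=0$) from the two-lobe situation ($A_v=-1$). A secondary subtlety is ruling out the spurious corner-to-corner touching of two distinct high regions, which would formally produce the impossible value $A_v=-2$; I would argue this pattern cannot occur for genuine Dyck islands, since the monotonicity and step-size conditions of Definition \ref{def:di} force any off-diagonal osculation of boundary arcs into the nested $c,\,c+1,\,c+1,\,c+2$ pattern rather than the forbidden $c+1,\,c,\,c,\,c+1$ pattern. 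Once these conventions are fixed, the remaining cases are routine second-difference evaluations.
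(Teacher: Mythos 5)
Your proof is correct, and it takes a genuinely different route from the paper's. The paper works in the fully packed loop picture: it applies $M^{-1}$ from Proposition \ref{prop:bijectionM}, checks that the four generic corner configurations become vertices of type $5$ or $6$ after the flip (hence nonzero entries), and then determines \emph{which} sign each such vertex carries by a global argument --- scanning the column or row through the vertex and invoking the alternating-sign condition, first for a single loop and then for nested loops --- with the three exceptions handled by separate local pictures and a further alternating-condition argument. Your argument replaces all of this with the single local identity $A_{ij}=A^{(0)}_{ij}-\left(\delta_{ij}-\delta_{i-1,j}-\delta_{i,j-1}+\delta_{i-1,j-1}\right)$: the signs fall out of the mixed second difference instead of the alternating condition, nesting is absorbed into the ambient constant $c$, which cancels, and your observation that monotonicity of the boundary Dyck paths (forced by Definition \ref{def:di}) excludes the checkerboard pattern $c+1,c,c,c+1$ is exactly the right way to see that the case list is exhaustive; I confirmed your quadrant computations in all cases, including loop endpoints and diagonal osculations under the merging convention, where the loop interior occupies the two diagonal quadrants and the second difference is $2$. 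What the paper's route buys is direct contact with the six-vertex types; what yours buys is a purely local, sign-unambiguous computation with no global bookkeeping. The one step you must make explicit is the compatibility of your two descriptions of $\delta$: your identity uses $\delta_{ij}=\tfrac12(h_{ij}-h^{(0)}_{ij})$, while the proposition hands you $\delta_{ij}$ as the number of boundary loops containing the box $(i,j)$, and the paper never proves these agree (its own proof, staying in the FPL picture, never needs to). The identification is closable with the paper's tools: a plaquette flip changes the height function by exactly $\pm 2$ at the single enclosed face and nowhere else, so the height-difference tableau transforms by the local update $u_\alpha$, just as the loop-counting tableau does by Proposition \ref{prop:bijectionM}; both tableaux vanish at $\phi_0$; and every fully packed loop is reached from $\phi_0$ by plaquette flips (Lemma \ref{lem:plaquetteConstruction} applied to the loops of its alternating path). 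Hence the two tableaux coincide, and with that remark inserted your proof is complete.
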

\begin{proof}
	The proof proceeds by applying the map $M^{-1}$ where $M$ is the bijection from Proposition \ref{prop:bijectionM} and carefully examining the resulting picture. In all of the diagrams below, red lines indicate the location of the action of a flip of an alternating loop. 

	The generic picture is established by observing four scenarios:
	\begin{equation*}
		\raisebox{-.5cm}{
			\begin{tikzpicture}[scale=.5]
				\draw[very thin,gray] (-.9,-.9) grid (.9,.9);
				\draw[very thick] (0,-.9) -- (0,0) -- (.9,0);
				\draw[line width=1mm,color=red,opacity=.4] (-.9,0) -- (0,0) -- (0,-.9);
			\end{tikzpicture}
		}
		\raisebox{-.5cm}{
			\begin{tikzpicture}[scale=.5]
				\draw[very thin,gray] (-.9,-.9) grid (.9,.9);
				\draw[very thick] (0,-.9) -- (0,0) -- (.9,0);
				\draw[line width=1mm,color=red,opacity=.4] (.9,0) -- (0,0) -- (0,.9);
			\end{tikzpicture}
		}
		\raisebox{-.5cm}{
			\begin{tikzpicture}[scale=.5]
				\draw[very thin,gray] (-.9,-.9) grid (.9,.9);
				\draw[very thick] (-.9,0) -- (0,0) -- (0,.9);
				\draw[line width=1mm,color=red,opacity=.4] (-.9,0) -- (0,0) -- (0,-.9);
			\end{tikzpicture}
		}
		\raisebox{-.5cm}{
			\begin{tikzpicture}[scale=.5]
				\draw[very thin,gray] (-.9,-.9) grid (.9,.9);
				\draw[very thick] (-.9,0) -- (0,0) -- (0,.9);
				\draw[line width=1mm,color=red,opacity=.4] (.9,0) -- (0,0) -- (0,.9);
			\end{tikzpicture}
		}.
	\end{equation*}
	A flip of $\tilde{\gamma}$ acting on $\phi_0$ transforms these cases into vertices of type $5$ or $6$ (see Figure \ref{fig:6v}). It follows that such vertices correspond to non-zero entries in the alternating sign matrix picture.
	
	To see that the alternating condition is satisfied in the generic case, suppose that we are in a situation with no exceptions (that is, all loops are disjoint and do not have northeast and southwest boundaries intersecting at any point on the diagonal). We first demonstrate that the alternating condition is satisfied in the case that there is only one loop and then show that it is also satisfied for nested loops. The generic case follows.
	
	Fix a loop $\gamma_i$ and consider the Dyck island described by this single loop. Fix a column for observation such that $\gamma_i$ intersects the column in at least one vertex of type $ud$. If the column intersects the point of $\gamma_i$ furthest to the upper left or if it intersects the point of $\gamma_i$ furthest to the lower right, then the column contains only one vertex of type $ud$ and no vertices of type $du$. Otherwise, $\neb(\gamma_i)$ must intersect the column in one vertex of type $ud$ above one vertex of type $du$, or else it intersects along neither. Likewise, still considering the same column, $\swb(\gamma_i)$ must intersect the column in one vertex of type $du$ above one vertex of type $ud$, or else it intersects along neither. After accounting for a diagonal one, we see that the alternating condition is satisfied. See Figure \ref{fig:column}.
	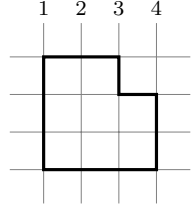
\begin{figure}[h]
		\begin{center}
			\begin{tikzpicture}[scale=.5]
			\draw[very thin, gray] (-.9,-.9) grid (3.9,3.9);
			\draw[very thick] (0,3) -- (2,3) -- (2,2) -- (3,2) -- (3,0) -- (0,0) -- cycle;	
			\node at (0,4.3) {\tiny{1}};
			\node at (1,4.3) {\tiny{2}};
			\node at (2,4.3) {\tiny{3}};
			\node at (3,4.3) {\tiny{4}};
			\end{tikzpicture}
		\end{center}
		\caption{Columns 2 and 3 illustrate how vertices of type $ud$ and $iu$ come in pairs. Columns 1 and 4 illustrates what happens in extremal situations.}
		\label{fig:column}
	\end{figure}

	Suppose we consider a new Dyck island with two loops $\gamma_i$ and $\gamma_j$ such that $\gamma_i$ is contained in the interior of $\gamma_j$. If we pick a column which intersects both $\gamma_i$ and $\gamma_j$, then intersections will be ordered in the following way (reading from top to bottom): First intersections of $\neb(\gamma_j)$, then intersections of $\neb(\gamma_i)$, then possibly a diagonal one, then intersections of $\swb(\gamma_i)$ and lastly, intersections of $\swb(\gamma_j)$. It is easy to check that the alternating condition is still satisfied. Intersections along rows are checked in exactly the same way.
	
	Exception (1) corresponds to one of the following two pictures:
	\begin{equation*}
		\raisebox{-.5cm}{
			\begin{tikzpicture}[scale=.5]
				\draw[very thin,gray] (-.9,-.9) grid (.9,.9);
				\draw[very thick] (0,-.9) -- (0,.9);
				\draw[line width=1mm,color=red,opacity=.4] (-.9,0) -- (0,0) -- (0,-.9);
			\end{tikzpicture}
		}
		\raisebox{-.5cm}{
			\begin{tikzpicture}[scale=.5]
				\draw[very thin,gray] (-.9,-.9) grid (.9,.9);
				\draw[very thick] (0,-.9) -- (0,.9);
				\draw[line width=1mm,color=red,opacity=.4] (.9,0) -- (0,0) -- (0,.9);
			\end{tikzpicture}
		}.
	\end{equation*}

	Likewise, exception (2) corresponds to
	\begin{equation*}
		\raisebox{-.5cm}{
			\begin{tikzpicture}[scale=.5]
				\draw[very thin,gray] (-.9,-.9) grid (.9,.9);
				\draw[very thick] (0,-.9) -- (0,.9);
				\draw[line width=1mm,color=red,opacity=.4] (-.9,0) -- (0,0) -- (0,-.9);
				\draw[line width=1mm,color=red,opacity=.4] (.9,0) -- (0,0) -- (0,.9);
			\end{tikzpicture}
		}
		\raisebox{-.5cm}{
			\begin{tikzpicture}[scale=.5]
				\draw[very thin,gray] (-.9,-.9) grid (.9,.9);
				\draw[very thick] (0,-.9) -- (0,.9);
				\draw[line width=1mm,color=red,opacity=.4] (.9,0) -- (0,0) -- (0,.9);
				\draw[line width=1mm,color=red,opacity=.4] (-.9,0) -- (0,0) -- (0,-.9);
			\end{tikzpicture}
		}.
	\end{equation*}

	Lastly, exception (3) corresponds to 

	\begin{equation*}
		\raisebox{-.5cm}{
			\begin{tikzpicture}[scale=.5]
				\draw[very thin,gray] (-.9,-.9) grid (.9,.9);
				\draw[very thick] (0,-.9) -- (0,0) -- (.9,0);
				\draw[line width=1mm,color=red,opacity=.4] (-.9,0) -- (0,0) -- (0,-.9);
				\draw[line width=1mm,color=red,opacity=.4] (.9,0) -- (0,0) -- (0,.9);
			\end{tikzpicture}
		}
		\raisebox{-.5cm}{
			\begin{tikzpicture}[scale=.5]
				\draw[very thin,gray] (-.9,-.9) grid (.9,.9);
				\draw[very thick] (0,-.9) -- (0,0) -- (.9,0);
				\draw[line width=1mm,color=red,opacity=.4] (.9,0) -- (0,0) -- (0,.9);
				\draw[line width=1mm,color=red,opacity=.4] (-.9,0) -- (0,0) -- (0,-.9);
			\end{tikzpicture}
		}
		\raisebox{-.5cm}{
			\begin{tikzpicture}[scale=.5]
				\draw[very thin,gray] (-.9,-.9) grid (.9,.9);
				\draw[very thick] (-.9,0) -- (0,0) -- (0,.9);
				\draw[line width=1mm,color=red,opacity=.4] (-.9,0) -- (0,0) -- (0,-.9);
				\draw[line width=1mm,color=red,opacity=.4] (.9,0) -- (0,0) -- (0,.9);
			\end{tikzpicture}
		}
		\raisebox{-.5cm}{
			\begin{tikzpicture}[scale=.5]
				\draw[very thin,gray] (-.9,-.9) grid (.9,.9);
				\draw[very thick] (-.9,0) -- (0,0) -- (0,.9);
				\draw[line width=1mm,color=red,opacity=.4] (.9,0) -- (0,0) -- (0,.9);
				\draw[line width=1mm,color=red,opacity=.4] (-.9,0) -- (0,0) -- (0,-.9);
			\end{tikzpicture}
		}.
	\end{equation*}
	Exceptions (1) and (3) are obvious and preserve the alternating condition because they may be interpreted as the merging of adjacent $1$ and $-1$ vertices into a vertex contributing $0$. Exception (2) is established by isolating $\gamma_i$. If we read the vertices in the given column (row) from top (left) to bottom (right), then there is a vertex of type $ud$ before and another vertex of type $ud$ after. $\gamma_i$ does not cross the column (row) in any other locations, so the alternating condition forces the vertex in consideration along the diagonal to be $-1$. 
\end{proof}
\section{Boundary paths of Dyck islands and Inversion number}
\label{sec:inv}
\begin{defn}
	The \emph{inversion number} of an alternating sign matrix $A$, denoted by $\inv(A)$, is 
	\begin{equation*}
		\inv(A) =  \sum_{\substack{1 \leq i, i^\prime, j,j^\prime \leq n\\ i>i^\prime \\j< j^\prime}} A_{ij} A_{i^\prime j^\prime}
	\end{equation*}
\end{defn}
The inversion number is an extension of the standard notion for permutation matrices to all alternating sign matrices. We shall often abuse notation and speak of the inversion number of the associated fully packed loop or Dyck island. 

By only considering non-zero terms, the above sum can be reduced to a sum over pairs of integer tuples $(i,j)$ and $(i^\prime, j^\prime)$ such that $(i^\prime,j^\prime)$ is strictly above and strictly to the right of $(i,j)$ and such that $A_{ij}$ and $A_{i^\prime j^\prime}$ are non-zero. It is easy to see that under a flip across the diagonal, such pairs and the value of $A_{ij}A_{i^\prime j^\prime}$ is preserved. Thus, the following lemma is true.

\begin{lemma}
	Let $A^\prime$ be the reflection of $A$ across the diagonal. Then $\inv(A) = \inv(A^\prime)$.
	\label{lem:invDiag}
\end{lemma}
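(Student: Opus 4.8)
The plan is to exploit the symmetric, product structure of the defining sum. Reflection across the diagonal is exactly the operation $A'_{ij} = A_{ji}$, so I would begin by writing $\inv(A)$ as a sum over pairs of positions and then track what the index swap $(i,j)\mapsto(j,i)$ does to the summation condition. The alternating sign structure will turn out to be irrelevant, so the whole argument reduces to careful bookkeeping with the four summation indices.

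First I would recast the definition. As indicated in the paragraph preceding the lemma, I would restrict to the nonzero terms and read $\inv(A)$ as a sum $\sum A_P A_Q$ running over ordered pairs of lattice positions $P=(i,j)$ and $Q=(i',j')$ for which $Q$ lies strictly to the northeast of $P$, that is $i'<i$ and $j'>j$. Because the summand is a product of two scalars, this is naturally a sum over the unordered pairs $\{P,Q\}$ standing in the northeast--southwest relation, weighted by $A_P A_Q$.

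Next I would compute $\inv(A')$ straight from the definition, substituting $A'_{ij}=A_{ji}$ to obtain $\inv(A') = \sum_{i>i',\,j<j'} A_{ji}\,A_{j'i'}$, and then change summation variables by setting $(p,q)=(j,i)$ and $(p',q')=(j',i')$. The inequalities $i>i'$ and $j<j'$ become $q>q'$ and $p<p'$, so the position $(p,q)$ lies strictly to the northeast of $(p',q')$. Thus reflection carries each northeast--southwest pair to another northeast--southwest pair while preserving the two entry values; the only change is which member of the pair is listed first, and that is immaterial since the summand is a product. Relabeling the dummy indices then returns exactly the sum defining $\inv(A)$, giving $\inv(A)=\inv(A')$.

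Since this is essentially a relabeling, I do not expect a genuine obstacle; the one place to be careful is the geometric claim that reflection across the main diagonal interchanges the northeast and southwest directions, so that a pair with $Q$ strictly northeast of $P$ is sent to a pair with its images in the opposite order. Verifying this directly on the inequalities, as above, rather than relying on a picture, is what makes the relabeling rigorous. I would also remark that the identity in fact holds for an arbitrary square matrix, and that passing to nonzero entries is merely a convenience for matching the statement with the Dyck-island boundary-path description used in the main theorem.
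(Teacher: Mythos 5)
Your proof is correct and matches the paper's argument: the paper likewise reduces $\inv$ to a sum over pairs of nonzero entries in the strictly-northeast relation and observes that reflection across the diagonal preserves such pairs and the products $A_{ij}A_{i'j'}$. Your version merely spells out the index substitution that the paper leaves as ``easy to see,'' which is a fine (and slightly more rigorous) rendering of the same idea.
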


We now give a way to characterize the inversion number in terms of the loops of a Dyck island. Let $\gamma$ be a loop of a given Dyck island $\delta$. We write $\inv(\gamma)$ to mean the inversion number of a new Dyck island defined by the single boundary loop, $\gamma$. 
\begin{thm}
	If $\delta$ is a Dyck island consisting of $\ell$ loops (possibly nested), $\gamma_1, \ldots , \gamma_\ell$ with a total number of $k$ off-diagonal osculations, then
	\begin{equation*}
		\inv(\delta) = \sum_{n=1}^\ell \inv(\gamma_n) -k
	\end{equation*}
	\label{thm:DIinversions}
\end{thm}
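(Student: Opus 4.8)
The plan is to exploit the \emph{linearity} of the height-function correspondence together with the \emph{bilinearity} of $\inv$, so as to reduce the whole statement to a local computation at each osculation. First I would record a superposition principle. Write $\chi_{\gamma_n}$ for the indicator of the boxes enclosed by the loop $\gamma_n$; by the defining rule that each entry of a Dyck island counts the loops containing that box, we have $\delta=\sum_{n=1}^{\ell}\chi_{\gamma_n}$. Since the height function is affine in the corner sums and the alternating sign matrix is the fixed second difference $A=-\tfrac12\Delta h$, the map $h\mapsto A$ is linear on the off-border entries; combined with $h=h^{(0)}+2\delta$ this yields the exact identity
\begin{equation*}
	A=I+\sum_{n=1}^{\ell}\beta_n,\qquad \beta_n:=A^{(n)}-I=-\Delta\,\chi_{\gamma_n},
\end{equation*}
where $I$ is the identity matrix (the matrix of $\phi_0$) and $A^{(n)}$ is the single-loop alternating sign matrix defining $\inv(\gamma_n)$. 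Thus each $\beta_n$ is the discrete Laplacian of a loop interior, supported exactly on the corners of $\gamma_n$.

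Next I would write $\inv$ as the bilinear form $Q(X,Y)=\sum_{i>i',\,j<j'}X_{ij}Y_{i'j'}$, so that $\inv(\delta)=Q(A,A)$ and $\inv(\gamma_n)=Q(A^{(n)},A^{(n)})$. Expanding $Q$ on $A=I+\sum_n\beta_n$ and subtracting $\sum_n Q(I+\beta_n,\,I+\beta_n)$, the contributions $Q(I,I)$, $Q(I,\beta_n)$, $Q(\beta_n,I)$ and $Q(\beta_n,\beta_n)$ all cancel once one observes $\inv(I)=0$, leaving
\begin{equation*}
	\inv(\delta)-\sum_{n=1}^{\ell}\inv(\gamma_n)=\sum_{n\neq m}Q(\beta_n,\beta_m).
\end{equation*}
It therefore suffices to show that, for each unordered pair of loops, $Q(\beta_n,\beta_m)+Q(\beta_m,\beta_n)$ equals minus the number of off-diagonal osculations shared by $\gamma_n$ and $\gamma_m$. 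Summing over pairs then gives exactly $-k$, because every off-diagonal osculation involves two \emph{distinct} loops (a single $\neb$ or $\swb$ Dyck path never self-touches), while diagonal osculations are internal to one loop and are already absorbed into the corresponding $\inv(\gamma_n)$.

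The pairwise term I would evaluate by summation by parts. Using $\beta_m=-\Delta\,\chi_{\gamma_m}$, the inner sum $\sum_{i'<i,\,j'>j}(\beta_m)_{i'j'}$ telescopes to a unit-shifted indicator of $\mathrm{int}\,\gamma_m$, and a second summation by parts transferring the difference onto $\chi_{\gamma_n}$ rewrites $Q(\beta_n,\beta_m)$ as a \emph{signed count of the corners of $\gamma_m$ lying in $\mathrm{int}\,\gamma_n$}, the sign at each corner being the value there of a mixed second difference of $\chi_{\gamma_m}$. The structural fact that makes everything collapse is that these corner signs telescope: summed over all corners of a single loop they give $0$ (the total mixed second difference of a compactly supported indicator vanishes). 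Consequently, for two loops with disjoint interiors, or for strictly nested loops that never touch, each corner of $\gamma_m$ is either entirely inside or entirely outside $\gamma_n$, the signed count is $0$, and the pairwise contribution vanishes, exactly as the theorem requires for nesting.

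Finally, the off-diagonal osculations are precisely where this cancellation breaks, and controlling them is the main obstacle. At such a point $v$ the boundaries of $\gamma_n$ and $\gamma_m$ meet at a single lattice vertex without sharing an edge (as observed after Definition \ref{def:di}), so $v$ is a corner of each loop that lies on, but not strictly inside, the other; the unit offset carried by the telescoped inner sum, together with the open-versus-closed distinction for ``interior,'' then spoils the telescoping by precisely one unit there. I would make this rigorous by a direct local check on the two admissible osculation configurations appearing as exception (3) in the preceding proposition (one on the northeast side, one on the southwest, interchanged by the diagonal reflection of Lemma \ref{lem:invDiag}), verifying that $Q(\beta_n,\beta_m)+Q(\beta_m,\beta_n)$ drops by exactly $1$ per osculation relative to the fully separated configuration. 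Summing these pairwise identities over all pairs of loops gives $\sum_{n\neq m}Q(\beta_n,\beta_m)=-k$, which is the assertion.
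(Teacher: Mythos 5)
Your route is genuinely different from the paper's, and its global skeleton is sound. The paper works loop by loop: it first proves $\inv(\gamma)=\semilength(\gamma)+m$ for a single loop (Lemma \ref{lem:singleLoop}) via contribution zones, then handles nesting through explicit formulas for $\C(\swb(\gamma_i))$ and $\C(\neb(\gamma_i))$ in terms of $\nesting(\gamma_i)$ and diagonal touchings (Lemma \ref{lem:swbneb}), and finally disposes of off-diagonal osculations by formally splitting each osculation vertex into a $(+1,-1)$ pair and measuring the cost of re-merging. You instead linearize at the level of matrices: since the height-function-to-matrix map is affine and the Dyck island satisfies $\delta=\sum_n\chi_{\gamma_n}$, the identity $A=I+\sum_n\beta_n$ with $\beta_n=A^{(n)}-I$ is exact, and expanding the bilinear form $Q$ with $\inv(I)=0$ collapses everything to $\inv(\delta)-\sum_n\inv(\gamma_n)=\sum_{n\neq m}Q(\beta_n,\beta_m)$. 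This is correct and is cleaner bookkeeping than the paper's: the cancellation of the $Q(I,\beta_n)$ and $Q(\beta_n,\beta_n)$ terms absorbs in one stroke what the paper tracks by hand through nesting numbers, $\diag(\gamma_i)$, and diagonal-osculation corrections. Your telescoping step is also right: $\sum_{i'<i,\,j'>j}(\beta_m)_{i'j'}$ equals the indicator that the box immediately northeast of the vertex $(i,j)$ lies inside $\gamma_m$, and combined with $\sum_v(\beta_n)_v=0$ this localizes each cross term at the vertices the two loops share, so non-touching pairs contribute $0$.

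There is, however, a genuine gap in the osculation step, and it sits exactly where the Dyck-island inequalities must enter. First, ``drops by exactly $1$ per osculation relative to the fully separated configuration'' is not a well-defined comparison, since no separation move is available; what you need is the direct evaluation obtained from your own telescoping: for nested $\gamma_n\subset\gamma_m$, writing $v^{+}$ for the box northeast of $v$, one has $Q(\beta_n,\beta_m)=-\sum_v(\beta_n)_v\bigl(1-\chi_m(v^{+})\bigr)$ and $Q(\beta_m,\beta_n)=\sum_v(\beta_m)_v\,\chi_n(v^{+})$, both supported only on osculation vertices. Second, and more seriously, your claim that pairs with disjoint interiors contribute $0$ would contradict the theorem if two non-nested loops could osculate off-diagonally: such an osculation is counted in $k$ but would contribute nothing. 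So two facts must actually be proved, and neither follows from exception (3) of the correspondence proposition you cite (that exception assigns matrix entries to configurations; it does not classify which configurations can occur): (i) every off-diagonal osculation is between nested loops, and (ii) at such a vertex the inner loop's interior box and the outer loop's exterior box form the northeast/southwest pair of the four boxes at $v$, never the northwest/southeast pair. Both are consequences of the monotonicity conditions of Definition \ref{def:di}; for instance, above the diagonal, if the inner loop's box at $v$ were the northwest box and the outer loop's exterior box the southeast one, the four entries around $v$ would be $2,1,1,0$ arranged so that $\delta_{i,j-1}\geq\delta_{i-1,j-1}$ reads $1\geq 2$, a contradiction, and the non-nested touching configurations fail similarly. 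With (i) and (ii) established, your local check does close: above the diagonal $(\beta_n)_v=+1$ with $v^{+}$ outside $\gamma_m$ gives $-1$ from the first formula, while $(\beta_m)_v=-1$ meets $\chi_n(v^{+})=0$ and gives $0$ from the second (and symmetrically below the diagonal, consistent with Lemma \ref{lem:invDiag}), so each osculation contributes exactly $-1$ and summing over pairs yields $-k$. Filling in these two points would make your argument complete, and arguably tighter than the paper's.
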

Before proving the theorem, we prove a lemma about evaluating a Dyck island consisting of only one loop $\gamma$.

Recall that the \emph{semilength} of a Dyck path of $2n$ steps ($n$ rises and $n$ falls) is $n$. If the northeast and southwest boundaries of a loop, $\gamma$, are Dyck paths of semilength $n$, then we say also that the semilength of $\gamma$ is $n$. Furthermore, we define an \emph{internal one} to be a diagonal point of the lattice which is strictly contained inside the interior of the loop. In the alternating sign matrix picture, these points correspond to entries along the diagonal with the value 1. 

The analysis of inversion numbers of Dyck islands will be facilitated by the Dyck path structure of the northeast and southwest boundaries of loops. The next lemma characterizes a key property of subpaths of Dyck paths encoded as words in a two letter alphabet. 
\begin{lemma}
	Let $w$ be a word in the alphabet $\{u,d\}$ such that $w$ begins with the letter $u$ and ends with the letter $d$. Then the number of consecutive $ud$ subwords minus the number of consecutive $du$ subwords is exactly 1.
	\label{lem:words}
\end{lemma}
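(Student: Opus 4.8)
The plan is to prove the identity by a telescoping (discrete-derivative) argument, which avoids any induction on the length of $w$. First I would assign a numerical value to each letter, say $f(u)=0$ and $f(d)=1$, and then, writing $w = w_1 w_2 \cdots w_N$, form the telescoping sum $\sum_{i=1}^{N-1}\bigl(f(w_{i+1})-f(w_i)\bigr)$. Since the interior terms cancel in pairs, this collapses to $f(w_N)-f(w_1)$, and because $w$ begins with $u$ and ends with $d$ this equals $f(d)-f(u)=1-0=1$.

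Next I would evaluate the very same sum term by term. A single summand $f(w_{i+1})-f(w_i)$ equals $+1$ exactly when the consecutive pair $(w_i,w_{i+1})$ is $(u,d)$, equals $-1$ exactly when it is $(d,u)$, and equals $0$ when the two adjacent letters coincide. Therefore the sum counts each consecutive $ud$ subword with weight $+1$ and each consecutive $du$ subword with weight $-1$, so it is precisely the number of $ud$ subwords minus the number of $du$ subwords. Equating the two evaluations of the sum yields the asserted difference of $1$. (An equivalent route is to decompose $w$ into maximal runs $u^{a_1}d^{b_1}u^{a_2}d^{b_2}\cdots$; the start/end conditions force an equal number $m$ of $u$-runs and $d$-runs arranged alternately, giving $m$ run-boundaries of type $u\!\to\!d$ and $m-1$ of type $d\!\to\!u$, hence the difference $m-(m-1)=1$.)

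I do not expect a genuine obstacle here; the entire content is bookkeeping. The only points requiring care are, first, confirming that adjacent \emph{equal} letters contribute nothing to the telescoping sum, and second, reading "consecutive $ud$/$du$ subwords" as length-two contiguous substrings, so that the telescoping weights match the stated counts exactly. It is worth noting where the hypotheses are used: the conditions that $w$ starts with $u$ and ends with $d$ enter \emph{only} through the two endpoint values $f(w_1)$ and $f(w_N)$. Without them the difference would instead be $f(w_N)-f(w_1)\in\{-1,0,1\}$, so these boundary conditions are precisely what pins the value to $1$.
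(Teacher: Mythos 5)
Your proof is correct, but it takes a genuinely different route from the paper's. The paper argues by induction on the length of $w$: it starts from the base word $ud$ (which evaluates to $1$), and observes that any word beginning with $u$ and ending with $d$ is obtained by inserting single letters into the interior, then checks the four local insertion scenarios ($uu \leftrightarrow uuu$, $ud \leftrightarrow uud$, $du \leftrightarrow duu$, $dd \leftrightarrow dud$, with insertions of $d$ handled analogously) to see that each insertion preserves the count difference. Your telescoping argument replaces that induction with a single direct computation: the sum $\sum_{i=1}^{N-1}\bigl(f(w_{i+1})-f(w_i)\bigr)$ is evaluated once by cancellation, giving $f(w_N)-f(w_1)=1$, and once termwise, giving exactly the number of $ud$ subwords minus the number of $du$ subwords. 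What your approach buys: it is non-inductive, it sidesteps a point the paper leaves implicit (that every word beginning with $u$ and ending with $d$ is actually reachable from $ud$ by interior insertions), and it makes transparent precisely where the boundary hypotheses are used --- only through the two endpoint values $f(w_1)$ and $f(w_N)$. Your run-decomposition variant is equally valid and essentially the same bookkeeping. What the paper's approach buys is stylistic coherence: the insertion moves mirror the local-update arguments used throughout the paper (plaquette flips, Dyck island update rules), so it reads naturally in context. Either proof is complete; yours is arguably the tighter one.
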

\begin{proof}
	Start with the word $ud$, which obviously evaluates to 1. The insertion of a single $u$ or $d$ in the middle of the word does not change this evaluation. Thus, by checking that the following four insertion scenarios do not change the evaluation, we are done.
	\begin{align*}
		uu & \leftrightarrow uuu \\
		ud & \leftrightarrow uud \\
		du & \leftrightarrow duu \\
		dd & \leftrightarrow dud
	\end{align*}
	The case for insertion of a $d$ is exactly analogous.
\end{proof}
\begin{defn}
	The \emph{contribution zone of vertex $v$} is the rectangular sublattice which lies strictly above and to the right of $v$. Let $CZ(v)$ denote the set of all vertices in the contribution zone of $v$ corresponding to alternating sign matrix entries $1$ or $-1$. The \emph{contribution of the vertex $v$} is the sum over products of pairs of entries given by 
	\begin{equation*}
		\sum_{w \in CZ(v)} A_v A_w
	\end{equation*}
	where $A_v$ denotes the alternating sign matrix entry corresponding to vertex position $v$. By Lemma \ref{lem:invDiag}, we could also assign the contribution zone of $v$ to be below and to the left.
\end{defn}

\begin{lemma}
	Fix an alternating sign matrix of size n corresponding to a Dyck island defined by one path $\gamma$. Let $v$ be a vertex on $\swb(\gamma)$ corresponding to a vertex of type $ud$ or $du$. Then the contribution of all pairs containing the vertex $v$ is $\pm \height(v)$, where the sign corresponds to whether $v$ is a $1$ or $-1$ in the alternating sign matrix picture.  
	\label{lem:heightv}
\end{lemma}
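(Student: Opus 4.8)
The plan is to evaluate the defined quantity $A_v\sum_{w\in CZ(v)}A_w$ directly, taking the contribution zone of $v$ to lie strictly above and to the right (which is permissible by Lemma \ref{lem:invDiag}), and to reduce everything to showing that the inner sum $S:=\sum_{w\in CZ(v)}A_w$ equals $\height(v)$. The sign asserted in the statement then comes for free from the prefactor $A_v\in\{+1,-1\}$, since by the Proposition describing the alternating sign matrix entries of a boundary path a $ud$ vertex is a $+1$ and a $du$ vertex is a $-1$.

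First I would collapse $S$ to a single corner sum. Writing $v$ at matrix position $(r,c)$, the zone is $\{(i',j'):i'<r,\ j'>c\}$, and using that each full row of an alternating sign matrix sums to $1$ gives
\begin{equation*}
	S=\sum_{i'=1}^{r-1}\sum_{j'=c+1}^{n}A_{i'j'}=(r-1)-\sum_{i'=1}^{r-1}\sum_{j'=1}^{c}A_{i'j'}=(r-1)-N_{r-1,c},
\end{equation*}
where $N_{ij}=\sum_{i'\le i,\,j'\le j}A_{i'j'}$. The height-function definition rearranges to $N_{ij}=\tfrac12(i+j-h_{ij})$, and because $\delta$ consists of a single loop the interior relation $h_{ij}=|i-j|+2\delta_{ij}$ holds. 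Since $v$ lies on $\swb(\gamma)$ the corner $(r-1,c)$ sits below the diagonal, so $|(r-1)-c|=(r-1)-c$, and the two substitutions collapse to
\begin{equation*}
	S=(r-1-c)+\delta_{r-1,c}.
\end{equation*}
It then remains to recognize the right-hand side as $\height(v)$: tracing the explicit $ud/du$ correspondence, the lattice point carrying $v$ is exactly $r-c$ steps below the diagonal, while the adjacent entry $\delta_{r-1,c}\in\{0,1\}$ records whether $v$ is a $du$ valley or a $ud$ peak, and the two terms combine to the distance of $v$ from the diagonal.

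An alternative, more self-contained route keeps everything on the Dyck-path side, and is the reason Lemma \ref{lem:words} was isolated. Every nonzero entry of $CZ(v)$ lies on $\neb(\gamma)$, on $\swb(\gamma)$, or on the diagonal (the internal ones), and along each Dyck path these entries alternate $+1,-1,+1,\dots$ as the word alternates $ud,du,ud,\dots$; hence $S$ is the alternating count $\#\{ud\}-\#\{du\}$ over the portion of the boundary meeting the zone. Applying Lemma \ref{lem:words} to each relevant subword contributes a net $+1$ for every level the loop sustains between $v$ and the diagonal, and summing over those levels reproduces $\height(v)$ without ever invoking the height function.

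The main obstacle is the incidence bookkeeping rather than any single estimate: one must pin down precisely which $\pm1$ entries fall strictly above and to the right of a given $\swb$ corner, correctly account for an internal one lying on the diagonal inside the zone, and arrange the endpoints of the relevant Dyck subwords so that the ``begins with $u$, ends with $d$'' hypothesis of Lemma \ref{lem:words} is met (or otherwise adjust using the boundary cases of the Proposition). Once these incidences are settled the partial-sum computation and the telescoping agree, and the factor $A_v$ supplies the correct $\pm$ sign, completing the proof.
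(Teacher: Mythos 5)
Your first route is genuinely different from the paper's proof (the paper pairs $v$ against the subpath of $\neb(\gamma)$ inside $CZ(v)$ together with the diagonal ones there, and invokes Lemma \ref{lem:words}), and your algebra up to $S=(r-1-c)+\delta_{r-1,c}$ is correct when $v$ lies strictly below the diagonal. But the step that finishes the argument is wrong as stated, and it is exactly where the content of the lemma lives. To conclude $S=\height(v)=r-c$ you need $\delta_{r-1,c}=1$ for \emph{every} corner $v$ of $\swb(\gamma)$ strictly below the diagonal, independent of its type; your claim that $\delta_{r-1,c}$ ``records whether $v$ is a $du$ valley or a $ud$ peak'' cannot be right, because if $\delta_{r-1,c}$ vanished for one corner type your formula would give contribution $\pm(\height(v)-1)$ for that type and the lemma itself would be false. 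What is true, and what you must actually prove, is the geometric fact that the unit box immediately northeast of any corner of $\swb(\gamma)$ strictly below the diagonal lies in the interior of $\gamma$: at a $ud$ corner that box is bounded by the two path edges on their interior side, while at a $du$ corner it is joined around $v$ to the interior boxes northwest and southeast of $v$ without crossing $\gamma$. (This is also precisely where single-loopness enters: for nested loops $\delta_{r-1,c}$ exceeds $1$ and the statement must be corrected as in Lemma \ref{lem:swbneb}.) Separately, the height-$0$ case does not come ``for free'': when $r=c$ the vertex $(r-1,c)$ lies above the diagonal, so $|(r-1)-c|=1\neq(r-1)-c$, and $A_v$ may be $0$ there (a $\swb$-only diagonal touch); with the wrong branch of the absolute value your formula assigns contribution $1$ to a diagonal osculation instead of the correct $0$. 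The diagonal case must be treated separately, as the paper does.

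Your second route is essentially the paper's argument, but you explicitly defer ``the incidence bookkeeping,'' and that bookkeeping \emph{is} the proof: one must check that $CZ(v)$ contains no other vertices of $\swb(\gamma)$; that the subpath of $\neb(\gamma)$ inside $CZ(v)$ begins with $u$ and ends with $d$, so Lemma \ref{lem:words} applies and yields exactly $+1$; that there are exactly $\height(v)-1$ diagonal ones in $CZ(v)$ when $\neb(\gamma)$ avoids the diagonal there; and that each of the $k$ diagonal touches of $\neb(\gamma)$ inside $CZ(v)$ removes one diagonal one but simultaneously converts a $du$ vertex from $-1$ to $0$, so the two corrections cancel. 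Your summary ``a net $+1$ for every level the loop sustains'' also misdescribes the count: the total $\height(v)$ arises as $\height(v)-1$ diagonal ones plus a single $+1$ from the $\neb$ subpath (with the $k$-fold correction), not as a per-level telescoping. So as submitted neither route is complete; the first would become a nice alternative proof once you insert the interior-box argument and handle the diagonal case.
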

\begin{proof}
	If $v$ is a vertex of type $ud$ or $du$ with height 0 and $v$ is not simultaneously in $\neb(\gamma)$, then we see that $v$ corresponds to neither $1$ nor $-1$ so that its contribution must be 0. When $\neb(\gamma)$ has an vertex corresponding to $du$ at $v$ as well, $v$ corresponds to the alternating sign matrix entry $-1$, but $CZ(v)$ is empty. Thus the result follows when $v$ is height 0. 

	Assume that $\height(v) \geq 1$ and suppose that $v$ corresponds to $1$ (the proof of the case of $-1$ is completely analogous). We wish to demonstrate that the contribution due to vertices of type $ud$ or $du$ in $\neb(\gamma)$ and diagonal ones inside $CZ(v)$ sum to $\height(v)$. Each diagonal one contributes 1 to the sum. The proof will follow once we give a description of the subpath of $\neb(\gamma)$ in $CZ(v)$. 

	First, we claim that the subpath starts at a vertex of type $uu$ or $ud$ and ends in a vertex of type $ud$ or $dd$. If this were not the case, we would be able to add an additional vertex to the subpath. Thus, the word corresponding to the subpath begins with $u$ and ends with $d$. 
	
	Assume that $\neb(\gamma)$ does not touch the diagonal in the contribution zone of $v$. Then there are $\height(v) -1$ diagonal ones and in the word description of the subpath of $\neb(\gamma)$ in $CZ(v)$ each $ud$ corresponds to $1$ and each $du$ corresponds to $-1$. By Lemma \ref{lem:words}, the contribution from the subpath of $\neb(\gamma)$ in $CZ(v)$ is 1, and we conclude that the contribution of all terms in $CZ(v)$ is $\height(v)$. See Figure \ref{fig:contribution1} for an illustration.

	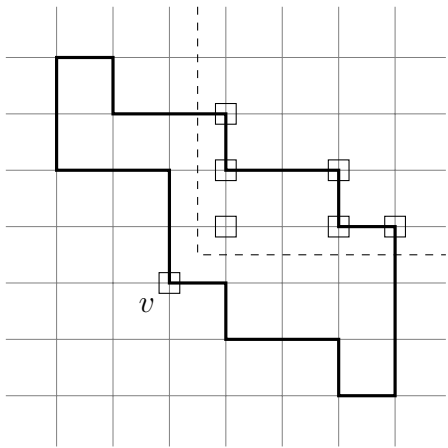
\begin{figure}[h]
		\begin{center}
			\begin{tikzpicture}[scale=.75]
				\draw[very thin, gray] (-.9,-.9) grid (6.9,6.9);
				\draw[very thick] (0,6) -- (1,6) -- (1,5) -- (3,5) -- (3,4) -- (5,4) -- (5,3) -- (6,3) -- (6,0) -- (5,0) -- (5,1) -- (3,1) -- (3,2) -- (2,2) -- (2,4) -- (0,4) -- cycle;
				\node[draw, rectangle] at (2,2) {};
				\node[draw, rectangle] at (3,3) {};
				\node[draw, rectangle] at (3,5) {};
				\node[draw, rectangle] at (3,4) {};
				\node[draw, rectangle] at (5,4) {};
				\node[draw, rectangle] at (5,3) {};
				\node[draw, rectangle] at (6,3) {};
				\node at (1.6,1.6) {$\tiny{v}$};
				\draw[dashed] (2.5,6.9) -- (2.5,2.5) -- (6.9,2.5);
			\end{tikzpicture}
		\end{center}
		\caption{Contributions from the contribution zone of $v$ include a diagonal one. Observe that $\height(v)=2$.}
		\label{fig:contribution1}
	\end{figure}
	Lastly, let us also consider the case when $\neb(\gamma)$ touches the diagonal in the contribution zone of $v$ a total of $k$ times. Then there are $\height(v)-1-k$ diagonal ones. The subpath of $\neb(\gamma)$ in the contribution zone of $v$ must still begin with a vertex of type $uu$ or $ud$ and end with a vertex of type $ud$ or $dd$. In constructing the word corresponding to the subpath, let us mark each vertex which touches the diagonal with $\tilde{d}\tilde{u}$. Such vertices correspond to 0, in the alternating sign matrix picture, but vertices labelled $u\tilde{d}$ or $\tilde{u}d$ still correspond to 1. Thus the contribution from the subpath in $CZ(v)$ is $1+k$ and the total contribution is $\height(v)$. See Figure \ref{fig:contribution2} for an illustration.
	\begin{figure}[h]
		\begin{center}
			\begin{tikzpicture}[scale=.75]
				\draw[very thin, gray] (-.9,-.9) grid (6.9,6.9);
				\draw[very thick] (0,6) -- (1,6) -- (1,5) -- (3,5) -- (3,4) -- (5,4) -- (5,3) -- (6,3) -- (6,0) -- (5,0) -- (5,1) -- (3,1) -- (3,2) -- (2,2) -- (2,4) -- (0,4) -- cycle;
				\node[draw, rectangle] at (0,4) {};
				\node[draw, rectangle] at (1,6) {};
				\node[draw, circle] at (1,5) {};
				\node[draw, rectangle] at (3,5) {};
				\node at (-.4,3.6) {$\tiny{u}$};
				\draw[dashed] (.5,6.9) -- (0.5,4.5) -- (6.9,4.5);
			\end{tikzpicture}
		\end{center}
		\caption{The case when $\neb(\gamma)$ touches the diagonal. Note that the contribution from $u$ is still $\height(u) =2$ since the vertex of type $du$ touching the diagonal (labelled with a circle) contributes 0 instead of $-1$.}
		\label{fig:contribution2}
	\end{figure}
\end{proof}
\begin{lemma}
	Suppose that an $n\times n$ alternating sign matrix, $\gamma$, corresponds to a Dyck island described by a single loop, which we also call $\gamma$, and suppose also that it has $m$ diagonal ones. Then we have
	\begin{equation*}
		\inv(\gamma) = \semilength(\gamma) + m.
	\end{equation*}
	\label{lem:singleLoop}
\end{lemma}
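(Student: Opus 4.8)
The plan is to compute $\inv(\gamma)$ by summing the inversion contribution of each nonzero entry, using the contribution zone lying strictly above and to the right, exactly as set up for Lemma~\ref{lem:heightv}. Since
\begin{equation*}
	\inv(\gamma) = \sum_{v} A_v \sum_{w \in CZ(v)} A_w = \sum_{v} (\textrm{contribution of } v),
\end{equation*}
where the outer sum ranges over all nonzero entries $v$, it suffices to sort the nonzero vertices of the single-loop Dyck island by type and evaluate each type. The nonzero vertices are: the corners of $\swb(\gamma)$ strictly below the diagonal, the corners of $\neb(\gamma)$ strictly above the diagonal, the $m$ internal ones on the diagonal, and the exceptional diagonal vertices (which are $0$ or $-1$). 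I claim the $\swb$ corners account for $\semilength(\gamma)$, the internal ones account for $m$, and everything else contributes $0$.

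First I would dispose of the vertices that contribute nothing. A corner $w$ of $\neb(\gamma)$ lies strictly above the diagonal, so its contribution zone can contain no diagonal point and no point of $\swb(\gamma)$; and since $\neb(\gamma)$ is a monotone staircase, no other corner of $\neb(\gamma)$ lies strictly above and to the right of $w$ either. Hence $CZ(w)$ is empty. The same emptiness holds for the diagonal exception points, by the analysis already made in Lemma~\ref{lem:heightv}. For an internal one $d$ we have $A_d = 1$, and since no diagonal point can lie strictly above and to the right of a diagonal point, $CZ(d)$ meets only $\neb(\gamma)$, along a subpath that (just as in Lemma~\ref{lem:heightv}) begins with $u$, ends with $d$, and never touches the diagonal. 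By Lemma~\ref{lem:words} this subpath contributes $1$, so each internal one contributes $1$ and together they account for $m$.

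It remains to evaluate the $\swb$ corners. By Lemma~\ref{lem:heightv}, a corner $v$ of $\swb(\gamma)$ contributes $\pm\height(v)$, with sign $+$ at a $ud$ corner (a local maximum of distance from the diagonal) and $-$ at a $du$ corner (a local minimum); corners on the diagonal have height $0$, folding the diagonal exceptions harmlessly into this tally. Writing the height profile of the Dyck path $\swb(\gamma)$ and decomposing it into maximal ascending and descending runs, the $i$-th ascending run has length $p_i - q_{i-1}$, where $p_i$ is the height of the $i$-th peak and $q_{i-1}$ the height of the preceding valley (with both endpoints counted at height $0$). Summing gives
\begin{equation*}
	\sum_{\textrm{peaks}} \height - \sum_{\textrm{valleys}} \height = \#\{\textrm{up-steps of } \swb(\gamma)\} = \semilength(\gamma),
\end{equation*}
which is exactly $\sum_{v \in \swb(\gamma)} (\pm\height(v))$. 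Combining the three tallies yields $\inv(\gamma) = \semilength(\gamma) + m$.

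The step I expect to be the genuine obstacle is verifying that these types exhaust all inversion-contributing pairs with no double counting—concretely, confirming from the monotonicity of $\neb(\gamma)$ and $\swb(\gamma)$ that the only pairs are $(\swb,\neb)$, $(\swb,\textrm{internal one})$, and $(\textrm{internal one},\neb)$, and carefully tracking diagonal osculations, where a single loop may pinch and where the exceptional $0$ and $-1$ vertices sit. Lemma~\ref{lem:heightv} already absorbs most of this bookkeeping on the $\swb$ side, so the remaining care lies in the $\neb$-and-internal-one accounting and in the clean reindexing of the height sum.
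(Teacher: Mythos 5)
Your proof is correct, and its skeleton matches the paper's: both compute $\inv(\gamma)$ as a sum of per-vertex contributions over contribution zones, both invoke Lemma \ref{lem:heightv} to evaluate the corners of $\swb(\gamma)$, and both invoke Lemma \ref{lem:words} to show each internal one contributes exactly $1$. The one place you genuinely diverge is the evaluation of the alternating height sum $\sum_{ud}\height(v)-\sum_{du}\height(v)$ over corners of $\swb(\gamma)$: the paper verifies it for the base word $uu\cdots udd\cdots d$ and then checks invariance under the four local interchanges $ud\leftrightarrow du$, whereas you telescope peak heights against valley heights along maximal ascending runs to get the number of up-steps, i.e.\ $\semilength(\gamma)$, directly. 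Your telescoping argument is shorter and avoids the case analysis, and it buys a cleaner statement of why the identity is really just ``up-steps counted once.'' You are also more explicit than the paper on a point it merely asserts: that corners of $\neb(\gamma)$ have empty contribution zones (your monotone-staircase argument), which is exactly what makes the pair types $(\swb\textrm{ corner},\neb\textrm{ corner})$, $(\swb\textrm{ corner},\textrm{internal one})$, and $(\textrm{internal one},\neb\textrm{ corner})$ exhaustive; the paper compresses this into the remark that nothing lies above and to the right of $\neb(\gamma)$. One shared imprecision, inherited from the paper: diagonal $1$'s lying \emph{outside} the loop (which exist whenever $\gamma$ does not span the whole diagonal, since those vertices of $\phi_0$ are untouched by the flip) are nonzero entries of the alternating sign matrix, so your claim that your four vertex classes exhaust the nonzero entries is not literally true. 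This is harmless --- such entries lie in no contribution zone and have empty contribution zones themselves, so they are inert --- but it is also precisely why $m$ in the statement must be read as the number of \emph{internal} ones, as you (and the paper's own proof) implicitly do.
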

\begin{proof}
We will pair up vertices in the following manner: First, we pair up all diagonal ones with each of the vertices in their respective contribution zones. These vertices will all lie on $\neb(\gamma)$. Second, we pair up all vertices on $\swb(\gamma)$ with each of the vertices in each of their respective contribution zones. These vertices will lie on $\neb(\gamma)$ and also the diagonal ones. Lastly, we remark that all pairs are then accounted for, since there are no vertices above and to the right of $\neb(\gamma)$.

By Lemma \ref{lem:heightv}, each of the vertices on $\swb(\gamma)$ contribute $\height(v)$ for corners of type $ud$ and $-\height(v)$ for corners of type $du$. Diagonal ones contribute 1 to the inversion number sum, since the contribution from a diagonal one comes from pairs which lie on a subpath of $\neb(\gamma)$ to which Lemma \ref{lem:words} applies. 

Thus, it suffices to compute the following alternating sum of the heights:
	\begin{equation*}
		\sum_{x, \textrm{ corners of type } ud} \height(x) - \sum_{y, \textrm{ corners of type } du} \height(y) = \semilength (\gamma)
	\end{equation*}

This sum clearly holds when $\swb(\gamma)$ corresponds to the Dyck word $uu\ldots udd \ldots d$. We show that it holds for any permissible $\swb(\gamma)$ by showing that the sum is invariant under the interchange $ud \leftrightarrow du$ whenever the interchange yields a permissible Dyck word. Locally, there are four cases to check:
\begin{align*}
	\ldots uudd \ldots &\leftrightarrow \ldots udud \ldots \\
	\ldots uudu \ldots &\leftrightarrow \ldots uduu \ldots \\
	\ldots dudd \ldots &\leftrightarrow \ldots ddud \ldots \\
	\ldots dudu \ldots &\leftrightarrow \ldots dduu \ldots \\
\end{align*}
Let us check the first case and remark that the other cases are completely analogous. On the left hand side, locally, we have a single vertex of type $ud$ at height $h$. On the right hand side, this becomes two vertices of type $ud$ at height $h-1$ and one vertex of type $du$ at height $h-2$. The corresponding sum for the right hand side is $2(h-1) -(h-2) = h$. 
\end{proof}
\begin{defn}
	An \emph{off-diagonal osculation} is a vertex $v$ not on the diagonal which lies on $\neb(\gamma_1)$ and $\swb(\gamma_2)$ for two distinct boundary paths $\gamma_1$ and $\gamma_2$ in a Dyck island. See Figure \ref{fig:osculation} for an example.
\end{defn}

We will need the following technical notation in order to complete the proof of Theorem \ref{thm:DIinversions}.

\begin{defn}
	Consider some $n\times n$ alternating sign matrix corresponding to a Dyck island with boundary paths $\gamma_1, \ldots, \gamma_\ell$. We define $\nesting(\gamma_i)$ to be the number of loops in the set $\{\gamma_1,\ldots,\gamma_\ell\}$, excluding $\gamma_i$, which contain $\gamma_i$ in its interior.
	Likewise, if this given alternating sign matrix has diagonal ones located at vertices $p_1, \ldots, p_m$, then $\nesting(p_i)$ denotes the number of loops in the set $\{\gamma_1, \ldots,\gamma_\ell\}$ which contain $p_i$ in its interior. 
\end{defn}
\begin{defn}
	If a Dyck island, $\delta$, is given and is described by the boundary paths $\gamma_1,\ldots, \gamma_\ell$, consider a new Dyck island, denoted by $\delta_i$, which is described by the single path $\gamma_i$ for some $1\leq i \leq \ell$. Let $\diag(\gamma_i)$ be the number of diagonal ones of $\delta_i$ contained inside $\gamma_i$. 
\end{defn}
\begin{defn}
	Let $\gamma$ be a boundary path in some fixed Dyck island. We define $\mathcal{O}^\gamma$ to be the set of vertices on the diagonal which are also vertices of $\gamma$. We define $\mathcal{O}^\gamma_{sw}$ to be the subset of $\mathcal{O}^\gamma$ which restricts to vertices on $\swb(\gamma)$. We note that by our conventions, it is possible for a vertex to be a vertex of $\swb(\gamma)$ and $\neb(\gamma)$ simultaneously. 
\end{defn}
\begin{defn}
	Consider some $n\times n$ alternating sign matrix corresponding to a Dyck island with boundary paths $\gamma_1, \ldots, \gamma_\ell$. The \emph{contribution of $\neb(\gamma_i)$ to the inversion sum}, denoted $\C(\neb(\gamma_i))$, is the sum of the contributions from all of the vertices in $\neb(\gamma_i)$. In exactly the same way, we define the \emph{contribution of $\swb(\gamma_i)$ to the inversion sum} and denote it by $\C(\neb(\gamma_i))$. If a vertex is in both $\neb(\gamma_i)$ and $\swb(\gamma_i)$, then in order to avoid double counting, we establish the convention that it contributes to $\C(\neb(\gamma_i))$ but not to $\C(\swb(\gamma_i))$.
\end{defn}

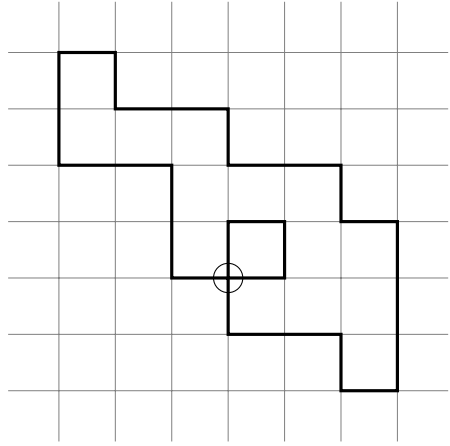
\begin{figure}[h]
	\begin{center}
		\begin{tikzpicture}[scale=.75]
				\draw[very thin, gray] (-.9,-.9) grid (6.9,6.9);
				\draw[very thick] (0,6) -- (1,6) -- (1,5) -- (3,5) -- (3,4) -- (5,4) -- (5,3) -- (6,3) -- (6,0) -- (5,0) -- (5,1) -- (3,1) -- (3,2) -- (2,2) -- (2,4) -- (0,4) -- cycle;
				\draw[very thick] (3,2) rectangle (4,3);
				\node[draw,circle] at (3,2) {};
		\end{tikzpicture}
	\end{center}
	\caption{An example of a Dyck island with two nested boundary paths. The only off-diagonal osculation labelled with a circle.}
	\label{fig:osculation}
\end{figure}
\begin{proof}[Proof of Theorem \ref{thm:DIinversions}]
	Fix an $n\times n$ alternating sign matrix which corresponds to a Dyck island with boundary paths $\gamma_1, \ldots , \gamma_\ell$. 
	
	If $\gamma_1, \ldots, \gamma_\ell$ are disjoint and not nested, the formula is clear, since we can consider a block diagonal decomposition of smaller alternating sign matrices, each containing a single $\gamma_i$. Then the result follows by applying Lemma \ref{lem:singleLoop}. 

	To deal the case when some of the $\gamma_i$ are nested, but have no off-diagonal osculations, we claim that we can evaluate the contributions of pairs containing vertices in the southwest boundaries and northeast boundaries of the $\gamma_i$ according to the formulas:
	\begin{align*}
		\C(\swb (\gamma)) &= \semilength(\gamma) + \nesting(\gamma) (1 + \#\{\mathcal{O}^\gamma_{sw}\}) \\
		\C(\neb (\gamma)) &= \nesting(\gamma) (1+ \#\{\mathcal{O}^\gamma \setminus \mathcal{O}^\gamma_{sw}\})
	\end{align*}
	A justification of these formulas will be provided in Lemma \ref{lem:swbneb}.
	
	Then if $\delta$ is the Dyck island corresponding described by the boundary paths $\gamma_1 , \ldots , \gamma_\ell$ with diagonal ones labeled $p_1, \ldots, p_m$ and with no off-diagonal osculations, we have the following evaluation of $\inv(\delta)$:
	\begin{align*}
		\inv(\delta) &= \sum_{i=1}^\ell \C(\swb(\gamma_i)) + \C(\neb(\gamma_i)) + \sum_{j=1}^m \nesting(p_j) \\
		&= \sum_{i=1}^\ell \semilength(\gamma_i) + \nesting(\gamma_i)(2+ \#\{\mathcal{O}^{\gamma_i}\}) + \sum_{j=1}^m \nesting(p_j) \\
		&= \sum_{i=1}^\ell \semilength(\gamma_i) + \diag(\gamma_i) \\
		&= \sum_{i=1}^\ell \inv(\gamma_i).
	\end{align*}
	The second to last equality holds because the number of diagonal osculations exactly account for the diagonal ones that would have been there otherwise. 
	Also, the value $2$ accounts for the two diagonal ones which would have been in place of the left and right endpoints of the northeast and southwest boundaries of $\gamma_i$. This is equivalent to the assertion that 
	\begin{equation*}
		\sum_{i=1}^\ell N(\gamma_i)(2+\#\{\mathcal{O}^{\gamma_i}\}) + \sum_{j=1}^m \nesting(p_j) = \sum_{i=1}^\ell \diag(\gamma_i).
	\end{equation*}
	See figures \ref{fig:calculation1} and \ref{fig:calculation2} for specific examples of this calculation.

	Lastly, let us account for off-diagonal osculations. Suppose $v_1, \ldots, v_k$ are all of the off-diagonal osculations in a given Dyck island. By definition, for each $v_i$, there are two distinct boundary paths $\gamma_{i_1}$ and $\gamma_{i_2}$ which meet at $v_i$. Without loss of generality, suppose that $\neb(\gamma_{i_1})$ is incident to $v_i$ in a vertex of type $ud$ and that $\neb(\gamma_{i_2})$ is incident to $v_i$ in a vertex of type $du$. It must follow that $\gamma_{i_1}$ is contained in the interior of $\gamma_{i_2}$. 
	Then suppose $v_i$ splits into two vertices $v_{i_1}$ and $v_{i_2}$ located at the same lattice point, with the formal condition that $v_{i_2}$ is considered above and to the right of $v_{i_1}$ and that $v_{i_1}$ is in $\neb(\gamma_{i_1})$ and $v_{i_2}$ is in $\neb(\gamma_{i_2})$. For the sake of computing the inversion number, we assume that $v_{i_1}$ corresponds to a $1$ and that $v_{i_2}$ corresponds to $-1$. If we carry out this procedure for each $i$ from $1$ to $k$, then we have formally reduced to the case above with no osculations. To finish, we simply need to calculate the effect of ``merging'' the two vertices $v_{i_1}$ and $v_{i_2}$ back into the vertex $v_i$. Suppose that $v_{i_2}$ contributes $-x$ to $\C(\neb(\gamma_{i_2}))$ for some non-negative integer $x$. It follows that $v_{i_1}$ contributes $x+1$ to $\C(\neb(\gamma_{i_1}))$ since it is nested one additional level beyond $v_{i_2}$ since it is (formally) in the interior of $\gamma_{i_2}$. The alternating sign matrix entry located at $v_i$ corresponds to a $0$, therefore the effect of merging $v_{i_1}$ and $v_{i_2}$ to the inversion number sum is to negate the contributions $-x + x +1$. Note that this splitting and merging procedure does not influence the contributions of other vertices to the inversion number sum.
	 Thus in the end, the sum $\sum_{i=1}^\ell \inv(\gamma_i)$ overestimates the inversion number by exactly the number of off-diagonal osculations, $k$, and the equation holds.
\end{proof}

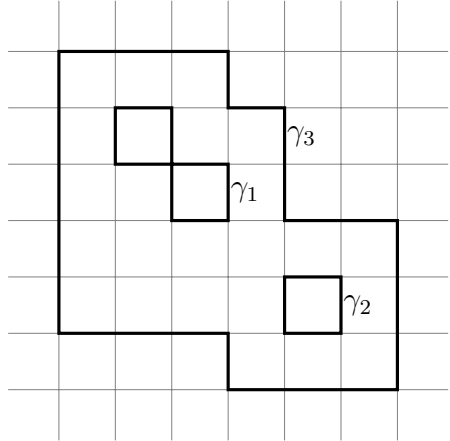
\begin{figure}[h]
	\begin{center}
		\begin{tikzpicture}[scale=.75]
				\draw[very thin, gray] (-.9,-.9) grid (6.9,6.9);
				\draw[very thick] (0,6) -- (3,6) -- (3,5) -- (4,5) -- (4,3) -- (6,3) -- (6,3) -- (6,0) -- (3,0) -- (3,1) -- (0,1) -- cycle;
				\draw[very thick] (4,1) rectangle (5,2);
				\draw[very thick] (1,5) -- (2,5) -- (2,4) -- (3,4) -- (3,3) -- (2,3) -- (2,4) -- (1,4) -- cycle;
				\node at (3.3,3.5) {$\tiny{\gamma_1}$};
				\node at (5.3,1.5) {$\tiny{\gamma_2}$};
				\node at (4.3,4.5) {$\tiny{\gamma_3}$};
		\end{tikzpicture}
	\end{center}
	\caption{We calculate the inversion number of the given Dyck island $\delta$ in two ways. First, we use the formula of Theorem \ref{thm:DIinversions}: $\inv(\delta) = \inv(\gamma_1) + \inv(\gamma_2) + \inv(\gamma_3)= 2 + 1 + 11 = 14$. Next, we calculate by summing contributions (note that there are no diagonal ones): $\sum_{i=1}^3 \C(\swb(\gamma_i)) + \C(\neb(\gamma_i)) = 14$ since $\C(\swb(\gamma_1)) = 4$, $\C(\neb(\gamma_1)) = 1$, $\C(\swb(\gamma_2)) = 2$, $\C(\swb(\gamma_2)) = 1$, $\C(\swb(\gamma_3)) = 6$, and $\C(\neb(\gamma_3)) = 0$.}
	\label{fig:calculation1}
\end{figure}
\begin{figure}[h]
	\begin{center}
		\begin{tikzpicture}[scale=.75]
				\draw[very thin, gray] (-.9,-.9) grid (6.9,6.9);
				\draw[very thick] (0,6) -- (3,6) -- (3,5) -- (4,5) -- (4,3) -- (6,3) -- (6,3) -- (6,0) -- (3,0) -- (3,1) -- (0,1) -- cycle;
				\draw[very thick] (4,1) rectangle (5,2);
				\draw[very thick] (1,5) -- (2,5) -- (2,4) -- (3,4) -- (3,3) -- (1,3) -- cycle;
				\node at (3.3,3.5) {$\tiny{\gamma_1}$};
				\node at (5.3,1.5) {$\tiny{\gamma_2}$};
				\node at (4.3,4.5) {$\tiny{\gamma_3}$};
		\end{tikzpicture}
	\end{center}
	\caption{For comparison, we modify $\gamma_1$ so that only $\neb(\gamma_1)$ touches the diagonal. Again, we use the formula of Theorem \ref{thm:DIinversions}: $\inv(\delta) = \inv(\gamma_1) + \inv(\gamma_2) + \inv(\gamma_3)= 2 + 1 + 11 = 14$. Next, we calculate by summing contributions noting again that there are no diagonal ones: $\sum_{i=1}^3 \C(\swb(\gamma_i)) + \C(\neb(\gamma_i)) = 14$ since $\C(\swb(\gamma_1)) = 3$, $\C(\neb(\gamma_1)) = 2$, $\C(\swb(\gamma_2)) = 2$, $\C(\swb(\gamma_2)) = 1$, $\C(\swb(\gamma_3)) = 6$, and $\C(\neb(\gamma_3)) = 0$.}
	\label{fig:calculation2}
\end{figure}
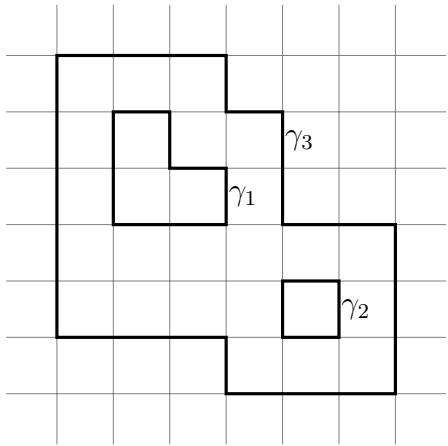

\begin{lemma} 
Consider an $n \times n$ alternating sign matrix corresponding to a Dyck island described by boundary paths $\gamma_1,\ldots , \gamma_\ell$. The contributions from the southwest boundaries and northeast boundaries of each $\gamma_i$ are:
	\begin{align*}
		\C(\swb (\gamma_i)) &= \semilength(\gamma_i) + \nesting(\gamma_i) (1 + \#\{\mathcal{O}^{\gamma_i}_{sw}\}) \\
		\C(\neb (\gamma_i)) &= \nesting(\gamma_i) (1+ \#\{\mathcal{O}^{\gamma_i} \setminus \mathcal{O}^{\gamma_i}_{sw}\}).
	\end{align*}
	\label{lem:swbneb}
\end{lemma}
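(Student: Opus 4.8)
The plan is to evaluate both contributions vertex by vertex, reducing everything to the single-loop computation of Lemma~\ref{lem:singleLoop} together with a uniform ``nesting shift.'' For a vertex $v$ lying on $\swb(\gamma_i)$ or $\neb(\gamma_i)$, write its contribution as $A_v\cdot S(v)$, where $S(v)=\sum_{w\in CZ(v)}A_w$ is the signed sum of alternating sign matrix entries in the contribution zone of $v$. The first step is to pin down $S(v)$ in terms of the Dyck island itself: through the height function relation $h=h^{(0)}+2\delta$, the quadrant sum $S(v)$ differs from its value $S_i(v)$ in the single-loop island $\delta_i$ by exactly $\delta-\delta_i$ evaluated at the box immediately inside $\gamma_i$ at $v$, that is, by the number of loops \emph{other than} $\gamma_i$ whose interior contains that box. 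Because distinct boundary Dyck paths of a Dyck island meet only at isolated points and never share an edge, no loop nested inside $\gamma_i$ and no sibling loop covers that box; only the $\nesting(\gamma_i)$ loops enclosing $\gamma_i$ do. Hence $S(v)=S_i(v)+\nesting(\gamma_i)$ at every boundary vertex of $\gamma_i$, and the contribution of $v$ splits as $A_vS_i(v)+\nesting(\gamma_i)\,A_v$.

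Summing the intrinsic terms $A_vS_i(v)$ reproduces the single-loop analysis. For $v$ on $\swb(\gamma_i)$, Lemma~\ref{lem:heightv} gives $A_vS_i(v)=\pm\height(v)$, and the telescoping computation of Lemma~\ref{lem:singleLoop} shows these sum to $\semilength(\gamma_i)$; here one uses that the alternating sign matrix entries along $\swb(\gamma_i)$ agree with those of the single-loop island away from the diagonal contacts, which holds in the absence of off-diagonal osculations. For $v$ on $\neb(\gamma_i)$ there is nothing above and to the right in the single-loop island, so $S_i(v)=0$ and the intrinsic terms vanish. This yields the $\semilength(\gamma_i)$ in $\C(\swb(\gamma_i))$ and its absence from $\C(\neb(\gamma_i))$.

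It remains to evaluate the extrinsic terms, which contribute $\nesting(\gamma_i)\sum_v A_v$, the sum taken over the appropriate boundary (with the convention that a vertex on both boundaries is assigned to $\neb$). I would compute these signed vertex sums from the Dyck word of the boundary path. Writing the word to begin with $u$ and end with $d$, Lemma~\ref{lem:words} gives a net count of $1$ from the generic corners, each $ud$ contributing $+1$ and each $du$ contributing $-1$; this is the source of the leading ``$1$'' in each formula. The diagonal contacts are then corrected using the exceptional rules of the Proposition expressing entries via $ud$/$du$ corners: a contact on a single boundary is a $du$ corner whose value is $0$ rather than $-1$, while a contact shared by $\neb$ and $\swb$ is a genuine $-1$ which the double-counting convention places on $\neb$. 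Tallying these corrections turns the bare $1$ into $1+\#\{\mathcal{O}^{\gamma_i}_{sw}\}$ along $\swb(\gamma_i)$ and into $1+\#\{\mathcal{O}^{\gamma_i}\setminus\mathcal{O}^{\gamma_i}_{sw}\}$ along $\neb(\gamma_i)$, which combined with the intrinsic contributions above gives the two stated formulas.

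The main obstacle is precisely this diagonal bookkeeping. Each diagonal vertex of $\gamma_i$ must be classified as lying on $\swb$ only, on $\neb$ only, or on both, and assigned its correct exceptional value before the word count can be read off. The two extreme turning points of the loop are the most delicate: they are shared by both boundaries yet, as one checks from the local picture, correspond to the entry $0$, so they drop out of both signed sums and must \emph{not} be counted among the intermediate contacts recorded by $\mathcal{O}^{\gamma_i}$. One must also confirm that the nesting shift is genuinely uniform, i.e.\ that at the diagonal contacts and turning points the adjacent box is still covered by exactly the $\nesting(\gamma_i)$ enclosing loops; this again rests on boundary paths touching only at isolated vertices. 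I would first verify the count for a single nested loop with no off-diagonal osculations—the setting in which the lemma is actually invoked—and cross-check it against the worked computations of Figures~\ref{fig:calculation1} and~\ref{fig:calculation2}.
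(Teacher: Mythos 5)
Your overall plan is the paper's own argument in different packaging: the ``intrinsic plus nesting shift'' split is exactly the paper's per-corner evaluation, in which each corner of $\swb(\gamma_i)$ contributes $\pm(\nesting(\gamma_i)+\height(v))$ and each corner of $\neb(\gamma_i)$ contributes $\pm\nesting(\gamma_i)$, summed via Lemma~\ref{lem:words} and the telescoping of Lemma~\ref{lem:singleLoop}; your diagonal bookkeeping (a contact on one boundary only carries entry $0$ in place of $-1$, a contact on both carries $-1$ and is charged to $\neb(\gamma_i)$ by the convention, and the two extreme turning points carry $0$ and are excluded from $\mathcal{O}^{\gamma_i}$) is the same case analysis, and the same reading of $\mathcal{O}^{\gamma_i}$, that the paper uses. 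Your height-function derivation of $S(v)=S_i(v)+\nesting(\gamma_i)$ supplies a justification for the quadrant sums that the paper simply asserts, which is a genuine improvement in care.

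There is, however, a real gap in that justification. You argue that no nested or sibling loop can cover the box above and to the right of $v$ ``because distinct boundary Dyck paths meet only at isolated points and never share an edge.'' Isolated touching is not sufficient. If a loop nested inside $\gamma_i$ osculates $\swb(\gamma_i)$ at a valley $v$ (an $\swb$--$\swb$ osculation, which the paper's correspondence explicitly allows in exceptional case (3)), that nested loop \emph{does} cover the box, the shift at $v$ exceeds $\nesting(\gamma_i)$, the alternating sign matrix entry at $v$ becomes $0$, and the stated formulas fail outright. Concretely, take the first Dyck island of Figure~\ref{fig:DIinv} and decompose its boundary edges into level-set loops: an octagon with a unit square inside it, osculating at two off-diagonal points. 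The unit square has $\nesting=1$, yet all four of its vertices carry entry $0$ (the off-diagonal corners are osculations, the diagonal corners are endpoints), so its two contributions are $0$ and $0$ where the formulas predict $2$ and $1$; likewise the octagon's southwest contribution is $4$, not $\semilength=3$. What actually makes the claim true is the hypothesis in force when Theorem~\ref{thm:DIinversions} invokes this lemma: distinct loops have no osculations of any kind (diagonal ones are merged into a single loop by convention, off-diagonal ones are excluded), hence distinct loops are disjoint as point sets. Disjointness is what gives the covering statement: a loop not enclosing $\gamma_i$ that covered a box with corner $v\in\gamma_i$ would have to pass through $v$ itself, and an enclosing loop that failed to cover that box would likewise have to pass through $v$. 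If you state that hypothesis explicitly and argue from disjointness rather than from isolated touching, your proof goes through and agrees with the paper's.
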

\begin{proof}
	Suppose that $\gamma_i$ is in the interior of N loops. First, let us prove the northeast boundary equation. Consider the Dyck word representing $\neb(\gamma_i)$. Generically, each `$ud$' contributes $+\mathrm{N}$ and each `$du$' contributes $-\mathrm{N}$. Thus by Lemma \ref{lem:words}, the total contribution is $\mathrm{N}$. The only exception to this rule is when $\neb(\gamma_i)$ touches the diagonal at a vertex $v$ such that $v \notin\mathcal{O}^\gamma_{sw}$. In this case, the corresponding `$du$' does not contribute a $-\mathrm{N}$ since there is a $0$ in place of a $1$ in the alternating sign matrix picture. This establishes the equation for the contribution to the northeast boundary.

	For the southwest boundary, $\swb(\gamma_i)$,  consider the corresponding Dyck word. Generically, each instance of `$ud$' contributes ($\mathrm{N} + \height$) and each instance of `$du$' contributes ($-\mathrm{N} - \height$). By Lemma \ref{lem:words} and the calculation in Lemma \ref{lem:singleLoop}, this word evaluates to $\semilength(\gamma) + \mathrm{N}$. There are two special cases to consider: corner vertices of $\swb(\gamma_i)$ along the diagonal which are either in $\neb(\gamma_i)$ or not in $\neb(\gamma_i)$. When a vertex along the diagonal is also a corner vertex of $\swb(\gamma_i)$ but not $\neb(\gamma_i)$, it corresponds to a $0$ in the alternating sign matrix picture instead of the $-1$ in the generic case. Thus we compensate by adding an extra factor of $N$ to $\C(\swb(\gamma_i))$. In the other case when a given vertex along the diagonal is simultaneously in $\swb(\gamma_i)$ and $\neb(\gamma_i)$, we observe that this vertex corresponds to $-1$ in the alternating sign matrix picture, but that it has already been accounted for in $\C(\neb(\gamma_i))$. Hence, every diagonal vertex on $\swb(\gamma_i)$ will contribute $0$ to $\C(\swb(\gamma_i))$ instead of $-\mathrm{N}$. Thus the equation for the contribution to the southeast boundary is established.
\end{proof}
\subsection*{Acknowledgements}
The author gratefully acknowledges Jessica Striker and anonymous referees for helpful observations and suggestions for improvement.

\bibliographystyle{amsplain}
\bibliography{di}

\end{document}